\newcommand*\diff{\mathop{}\!\mathrm{d}t}
\tikzset{
  treenode/.style = {shape=rectangle, rounded corners,
                     draw, align=center,
                     top color=white, bottom color=blue!20},
  root/.style     = {treenode, font=\Large, bottom color=red!30},
  env/.style      = {treenode, font=\ttfamily\normalsize},
  dummy/.style    = {circle,draw,minimum size = 0.6cm,pattern=crosshatch, pattern color = black!50},
  graph/.style    = {circle,draw,minimum size = 0.6cm,pattern=dots, pattern color = black!25},
  action/.style   = {circle,draw,minimum size = 0.6cm,pattern=crosshatch, pattern color = black!25},
}
\newtheorem{lemma}{Lemma}
\newtheorem{theorem}{Theorem}
\newtheorem{remark}{Remark}
\newtheorem{proposition}{Proposition}
\newcommand{\lap}[0]{{\mathcal{L}}}
\newcommand{\graph}[0]{{\mathcal{G}}}
\newcommand{\Nodes}{\mathcal{N}}
\newcommand{\Verts}{\mathcal{V}}
\newcommand{\Edges}{\mathcal{E}}
\newcommand{\Htwo}{{\mathcal{H}_2}}
\newcommand{\reals}{{\mathbb{R}}}
\newcommand{\D}{{\mathbf{D}}}
\newcommand{\T}{{\mathbf{T}}}
\newcommand{\E}{{\mathbf{E}}}
\newcommand{\W}{{\mathbf{W}}}
\newcommand{\R}{{\mathbf{R}}}
\newcommand{\X}{{\mathbf{X}}}
\newlength\mylen
\newcommand{\removelatexerror}{\let\@latex@error\@gobble}
\pretocmd\@bibitem{\color{black}\csname keycolor#1\endcsname}{}{\fail}
\newcommand\citecolor[1]{\@namedef{keycolor#1}{\color{blue}}}
\renewcommand{\@algocf@capt@plain}{above}
\begin{document}
\title{Performance and Design of Consensus on Matrix-Weighted and Time Scaled Graphs}
\author{Dillon R. Foight, \IEEEmembership{Student Member, IEEE,} Mathias Hudoba de Badyn, \IEEEmembership{Member, IEEE,}\\ and
        Mehran Mesbahi, \IEEEmembership{Fellow, IEEE} 
\thanks{Manuscript received November 22, 2019; revised March 18, 2020 and May 12, 2020; accepted May 26, 2020. This work was supported by the National Science Foundation (grant number DGE- 1762114), the National Sciences and Engineering Research Council of Canada (funding reference number CGSD2-502554-2017), the U.S. Army Research Laboratory and the U.S. Army Research Office (contract number W911NF-13-1-0340), and the U.S. Air Force Office of Scientific Research (grant number FA9550-16-1-0022). \textit{(D. Foight and M. Hudoba de Badyn are co-first authors.) (Corresponding author: D. Foight.)} A preliminary version of this work appears in the Proceedings of the 58th IEEE Conference on Decision and Control~\cite{Foight2019a}. }%
\thanks{The authors are with the William E. Boeing Department of Aeronautics and Astronautics at the University of Washington, Seattle, WA 98195, USA.
  MHdB is now with the Automatic Control Laboratory at ETH Z\"{urich}, 8092 Z\"{u}rich, Switzerland. e-mails: \texttt{\{dfoight,hudomath,mesbahi\}@uw.edu}. \textcopyright IEEE 2020}
}

\markboth{Transactions on Control of Networked Systems,~Vol.~xx, No.~x, August~20xx}%
{Foight, Hudoba de Badyn, \& Mesbahi: Time Scale and Matrix Weight Design for Consensus}

\maketitle

\begin{abstract}
  In this paper, we consider the $\Htwo$-norm of networked systems with multi-time scale consensus dynamics and vector-valued agent states.
  This allows us to explore how measurement and process  noise affect consensus on matrix-weighted graphs by examining edge-state consensus.
  In particular, we highlight an interesting case where the influences of the weighting and scaling on the $\Htwo$ norm can be separated in the design problem.
  We then consider optimization algorithms for updating the time scale parameters and matrix weights in order to minimize network response to injected noise.
  Finally, we present an application to formation control for multi-vehicle systems.
\end{abstract}


\IEEEpeerreviewmaketitle

\section{Introduction}

\IEEEPARstart{D}{ynamical} systems operating over networks appear in many natural and cyber-physical systems. A popular model of such dynamic processes is \emph{consensus}, which has been used for a variety of control and estimation applications, such as multi-agent systems~\cite{Chen2013a,Saber2003,Tanner2004}, robotics,~\cite{Joordens2009,Hudobadebadyn2018}, and distributed Kalman filtering~\cite{Olfati-Saber2005}. A natural question is how the underlying network topology affects the behavior of the dynamics operating over the network, motivated by the fact that notions of performance and control can be directly related to graph theoretic properties of the network. Of particular interest for this work is the $\Htwo$ system norm, which for networked dynamical systems can be interpreted as a measure of how input energy is attenuated over the network, or how noise drives deviations from the natural consensus state~\cite{Siami2014a}. 

In light of these interpretations, several works have characterized the $\Htwo$ performance for consensus networks. In~\cite{Chapman2015,9031557}, the performance of leader-follower networks is considered, and algorithms for rewiring and reweighting the network for optimal noise rejection are discussed. Similarly,~\cite{Bamieh2012,Patterson2010,Patterson2014} have utilized the $\Htwo$-norm as a measure of coherence in networks and considered problems such as local feedback laws and leader selection to promote coherence. Most relevant to the present contribution, the works~\cite{Zelazo2011a,Zelazo2013} investigated the impact of cycles on the $\Htwo$ performance of noise-driven consensus networks. Examining networks under noise inputs is especially important for network security and resiliency~\cite{Pasqualetti2013}, which motivates considering the minimization of the $\Htwo$ system norm in order to promote resilience to external noise inputs (which could be adversarial in nature).

A common simplifying assumption within the literature is that the agent dynamics are identical single or double integrators.
An extension of this simplified model is to consider the case where individual agents' states evolve at differing rates, motivated by similar formulations in areas such as electrical networks~\cite{Dorfler2018} and power networks with generator inertia~\cite{Chakrabortty2013}. The analysis of multi-scale problems has historically offered techniques for formal description and controller synthesis for complex systems~\cite{Kokotovic1999}; analysis of a multi-scale consensus model can increase the applicability of the consensus protocol to a wider range of real-world systems. As such, there is a growing body of literature addressing the complications that arise from the integrating multiple time scales into consensus, starting with the discussion of the consensus value for multi-rate integrators in~\cite{Olfati-Saber2004}. Issues such as convergence \cite{Pedroche2014}, stability \cite{Chapman2016,Awad2018}, controller design~\cite{Rejeb2016,Rejeb2018}, formation control~\cite{Deghat2018}, as well as single-influenced consensus performance~\cite{Foight2019} have since been addressed for such multi-scale networks. Considerations of optimizing the time scale parameters have been rare, possibly in part due to complications such as bilinear matrix equations that can naturally arise in multi-time scale formulations~\cite{Chapman2016}. An advantage of the noise driven edge consensus formulation presented here is that, under specific assumptions, the $\Htwo$ performance is characterized in a form that allows for streamlined optimization formulations.

Additionally, most of the existing literature on control and performance analysis for consensus consider the case where each node in the network has a \emph{scalar} state, or can be decomposed as such.
Recently, several extensions to the case of \emph{vector}-valued node states have been proposed; such consensus protocols are characterized by having graphs with \emph{matrix-valued edge weights}.
System-theoretic properties of matrix-valued weighted consensus networks, as well as applications to bearing-constrained formation control have been considered in~\cite{Trinh2017}.
Matrix weighted graphs have also been utilized in applications such as estimation~\cite{Barooah2008,Lee2016a}, spacecraft formation control~\cite{Ramirez2009}, the control of coupled oscillators~\cite{Tuna2016}, and opinion dynamics~\cite{Friedkin2016,Parsegov2017}.

In this paper, we consider design problems networks using the $\Htwo$ system norm as a metric of network's resiliency.
In particular, we examine consensus on matrix-weighted and time scaled networks, with process and measurement noise.
Drawing from the work in~\cite{Zelazo2011a}, we transform the general consensus problem to one over the edge states, and consider $\Htwo$-optimal design of the agent time scales and matrix edge weights.
The main contributions of the paper are the formulation of the consensus problem for matrix-weighted networks with time scales, a transformation yielding the dynamics of the edge states for this consensus problem, a method for separating the contributions of edge weighting and node time scales on the $\Htwo$ performance, design problems for time scale and edge weight assignment, and an application to formation control.

The paper is organized as follows.
In \S\ref{sec:math-prel}, we outline the necessary notation and graph theory used in the paper, and introduce the problem setup in \S\ref{sec:setup}.
The main results are divided into the formulation of the $\Htwo$ performance metric in \S\ref{sec:h2-results}, followed by design problems for time scale and edge weights in \S\ref{sec:ts-design} and \S\ref{sec:gu-design}. We apply the distributed design problems to a flocking model on second-order consensus in \S\ref{sec:example}. For readability, we relegate some technical proofs to the Appendix. 


\section{Mathematical Preliminaries} \label{sec:math-prel}

Here, we provide a brief overview of the notation and terminology used throughout the paper, as well as relevant graph theoretic concepts. Column vectors are denoted as $x \in \reals^n$. Special vectors include the vector of all ones (zeros), $\mathbf{1}$ ($\mathbf{0}$), the vector of diagonal elements in a matrix, $\mathrm{diag}(M)$, and Euclidean basis vectors, $e_i \in \reals^n$, where the $i$ denotes the index of the non-zero element. Matrices will be denoted as $M \in \reals^{m \times n}$. The ($k\times k$)  identity matrix will be denoted by $I_k$. The set of positive-(semi) definite matrices will be denoted by $\mathcal{S}_{++}^n$ ($\mathcal{S}_+^n$). The Kronecker product of two matrices $A,B$ is denoted by $A\otimes B$, and the operation $\mathrm{Blkdiag}(A_1,\dots,A_n)$ yields the matrix which has the matrices $A_1,\dots, A_n$ on its diagonal. For matrices $A$ and $B$, $A \preceq B$ implies $B-A \in \mathcal{S}_+$. Time-dependent quantities will be denoted as $x(t)$.

This paper considers dynamics governed by the interconnections of multi-rate, single integrator agents over connected, matrix-weighted communication graphs. In this formulation, we can consider a graph object defined by $\graph= \left(\Verts,\Edges\right)$, where $\Verts$ is the set of agents (nodes), and $\Edges$ is the set of edges. Associated with the graph are $\cal W$, a set of matrix edge weights, and $\cal T$, a set of time scaling factors for agents' states.

Individual agents states are vector-valued, $x\in\mathbb{R}^k$, and each agent will be indexed by subscripts, e.g. $\nu_i \in \Verts$ to represent the $i$-th agent where $1\leq i \leq |\Verts|$.
If $(i,j) \in \Edges$, the $i$-th and $j$-th agents are connected by an edge ($i \sim j$), and they are referred to as adjacent agents. For a given agent, $i$, $N(i) = \{j\ |\ i\sim j\ \forall j \in \Verts\}$ denotes the neighbors of $i$, and $\text{deg}(\nu_i) = |N(i)|$ denotes the unweighted degree of $i$. The $k$ values comprising an agent's state will be referred to as \textit{substates}, and the $j$-th substate of the $i$-th agent is denoted as $x_{i,j}$. 

As a consequence of considering vector-valued agent states, edges between agents are matrix-valued, which allows for a notion of dynamical coupling between neighboring agent states. Such matrix-valued weights will be denoted $W_{e}\in\mathcal{S}_{++}^k$, and so $\mathcal{W} = \{W_e~|~ e\in \Edges\}$. The \emph{weight matrix} $\bf W$ is a $k|\Edges|\times k|\Edges|$ blockwise diagonal matrix containing the weights $W_{ij}$ of each edge $e$. As in graphs with scalar agent states, the edge set can be ordered by a mapping, $\kappa(\cdot)$, such that $l = \kappa(ij)$ if and only if $(i,j) \in \Edges$. By this mapping, we can denote the weight on edge $\kappa(ij)$ by $W_l$ or $W_{ij}$, interchangeably. Furthermore, we also assume that each individual substate of each agent can operate on an independent time scale. Thus, for each node $i$ and set of corresponding time scales $T_i=\{\epsilon_{i,1},\dots,\epsilon_{i,k}\}\in\mathcal{T}$, we associate the \emph{time scale matrix} $E_i = \mathrm{diag}(\epsilon_{i,1},\dots,\epsilon_{i,k})\in\mathcal{S}^k_{++}$. Note that the positive-definiteness of $E_i$ is equivalent to requiring that each $\epsilon_{i,k} >0$.

The \emph{incidence matrix} $D(\graph)$ is a $|\Nodes|\times|\Edges|$ matrix with rows and columns indexed by the nodes and edges of $\graph$, respectively.
For each edge $l:=(i,j)$, where $i$ is the tail and $j$ is the head, $D(\graph)_{il} = 1$ and $D(\graph)_{jl}=-1$, and we denote the edge vector for edge $(i,j)$ by $a_{ij}$ (column of $D(\graph)$).
If $\graph$ is undirected, by convention we write that $D(\graph)_{il} = 1$ and $D(\graph)_{jl}=-1$ for $i>j$. For the formulation of matrix-valued weights, we define $\mathbf{D}(\graph) \triangleq D(\graph)\otimes I_k$ and $\mathbf{a}_{ij} \triangleq a_{ij}\otimes I_k$.
The \emph{weighted graph Laplacian} $\lap_w$ of an undirected graph $\graph$ can be defined thusly  as $\lap_w \triangleq \mathbf{D}(\graph)\mathbf{W}\mathbf{D}(\graph)^T = \sum_{ij\in\Edges} \mathbf{a}_{ij}W_{ij} \mathbf{a}_{ij}^T$.
Equivalently, it can be defined blockwise with the $k\times k$ block whose rows are associated with the $i$th node and whose columns are associated with the $j$th node given by $\sum_{j\in \mathcal{N}_i} W_{ij}$ if $i=j$, $- W_{ij}$ if $(i,j) \in \Edges$, and $\mathbf{0}_{k\times k}$ if $(i,j) \notin \Edges$.
%
%

\section{Network Models}
\label{sec:setup}
    
In this section, we will describe a general formulation for consensus over a communication network with positive-definite edge weighting and agent time scaling, with a model for measurement and process noise. The scaled consensus problem is derived from considering a group of $n$ multi-rate integrators~\cite{Olfati-Saber2004}, with zero-mean Gaussian process noise, $\omega_i(t)$ such that $\mathbb{E}\left[\omega_i(t) \omega_i(t)^T\right] = \Omega_i,~\forall i \in \Nodes$,
\begin{align}
\begin{bmatrix}
  \epsilon_{i,1} \dot{x}_{i,1} \\
  \vdots \\
  \epsilon_{i,k} \dot{x}_{i,k} 
\end{bmatrix}=
  E_i \dot{x}_i(t) =  u_i(t) + \omega_i(t), \label{eq:consensus}
\end{align}
where $x_i$ is the vector state of the $i$-th agent, $E_i$ is $\mathrm{diag}[\epsilon_{i,1},\dots,\epsilon_{i,k}]$, and $u_i$ is the control input.

Suppose that communication between agents $i$ and $j$ is corrupted by zero-mean Gaussian noise $v_{ij}$, and let $v_i$ denote the sum of all noise inputted into agent $i$ from the connections to its neighbors in $N(i)$. Without loss of generality, we can assume that the covariance of $v_i$ is given by $\mathbb{E}[v_i(t)v_i(t)^T] = \Gamma_i$. A weighted, decentralized feedback controller, with noise, that seeks to bring agents into consensus is given by,
\begin{align}
u_i(t) & = \sum_{j\in N(i)} \left[ \mathbf{W}_{ij} (x_j(t) - x_i(t)) + v_{ij}(t)\right] \nonumber \\
\mathbf{u}(t) & = -\mathbf{D}(\graph) \mathbf{W} \mathbf{D}(\graph)^T \mathbf{x}(t) + \mathbf{D}(\graph) \mathbf{v}(t), \label{eq:control}
\end{align}

\noindent where $\mathbf{W}$ is the block-diagonal matrix of edge weights with properties detailed in \S\ref{sec:math-prel}.
The vector $\mathbf{u}$ is the stacked vector of control vectors $u_i$, and $\mathbf{v}$ is the stacked vector of all measurement noises. Applying~\eqref{eq:control} to the system~\eqref{eq:consensus} with appropriate dimensions gives the general, time scaled and matrix weighted consensus problem with process and measurement noise,
\begin{equation} \label{eq:model}
\begin{aligned}
    \dot{\mathbf{x}}(t) & = -\mathbf{E}^{-1} \lap_w(\graph) \mathbf{x}(t) + \begin{bmatrix}\mathbf{E}^{-1} & -\mathbf{E}^{-1} \mathbf{D}(\graph)\end{bmatrix} \begin{bmatrix} \mathbf{\omega}(t) \\ \mathbf{v}(t) \end{bmatrix} \\
    \mathbf{z}(t) & = \D(\graph)^T \mathbf{x}(t)
\end{aligned}
\end{equation}
\noindent where $\lap_w(\graph)$ is the weighted Laplacian matrix, and $\mathbf{E} = \mathrm{Blkdiag}(E_1,\dots, E_n)$ is the full time scale matrix of $\graph$. Here, we have introduced an output $\mathbf{z}(t)$ which is used to monitor the network performance which captures the differences between the node states as they evolve. 

As noted by~\cite{Zelazo2011a,Foight2019} for scalar-valued node states over a connected graph, the zero eigenvalue (corresponding to the consensus subspace) of the Laplacian matrix precludes reasoning about the $\mathcal{H}_2$ performance of~\eqref{eq:model}. Under matrix weighting, the zero eigenvalue will have algebraic multiplicity $k$ (corresponding to the consensus subspace of each layer of substates)~\cite{Trinh2017}, so as in~\cite{Zelazo2011a} we will appeal to a similarity transformation that separates out the zero eigenvalues. We define this transformation in the following theorem.

\begin{theorem} \label{thm:similarity}
    The scaled and edge-weighted graph Laplacian for a connected graph with time scale matrix $\mathbf{E}$ and weight matrix $\mathbf{W}$, given by $\lap_{w,s} = \mathbf{E}^{-1}\mathbf{D}(\graph)\mathbf{W} \mathbf{D}(\graph)^T$, is similar to
    \begin{align}
        \begin{bmatrix}
        \lap_{e,s} \mathbf{RWR}^T & {\mathbf{0}}\\ {\mathbf{0}} & {\mathbf{0}}
        \end{bmatrix},
    \end{align}
    where $\lap_{e,s} = \mathbf{D}(\graph_\tau)^T \mathbf{E}^{-1} \mathbf{D}(\graph_\tau)$is the edge Laplacian for a spanning tree $\graph_\tau$ which is symmetrically ``weighted'' by the time scaling parameters, and
$
\mathbf{R}(\graph) = \begin{bmatrix} I & \mathbf{T}^{c}_\tau \end{bmatrix} = R\otimes I_k,
$
\noindent where $R$ is the basis of the cut space of $\graph$ as defined as in~\cite{Zelazo2011a}, with
$
\mathbf{T}^c_\tau = (\mathbf{D}(\graph_\tau)^T \mathbf{D}(\graph_\tau))^{-1} \mathbf{D}(\graph_\tau)^T \mathbf{D}(\graph_c).
$
\noindent Here, the $\tau$ and $c$ subscripts on $\graph$ denote the incidence matrices for a spanning tree and the complementary edges in $\graph$, respectively. 

\end{theorem}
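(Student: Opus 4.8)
The plan is to exhibit an explicit invertible state transformation $\mathbf{S}$ and verify by direct computation that $\mathbf{S}\lap_{w,s}\mathbf{S}^{-1}$ equals the claimed block-diagonal matrix. The guiding intuition is that the nonzero dynamics of $\lap_{w,s}$ live entirely on the edge states of a spanning tree, while the $k$-dimensional consensus subspace accounts for the zero block. Accordingly, I would take the new coordinates to be $\mathbf{x}_\tau = \mathbf{D}(\graph_\tau)^T\mathbf{x}$ (the tree edge states) stacked on top of a complementary set of $k$ coordinates that isolate the consensus directions.

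First I would record the cut-space factorization $\mathbf{D}(\graph) = \mathbf{D}(\graph_\tau)\mathbf{R}$. Since the columns of the incidence matrix of a connected graph span the cut space, which has dimension $|\Verts|-1$ and is spanned by the tree edges, every complementary edge vector lies in the range of $\mathbf{D}(\graph_\tau)$; the coefficient matrix is exactly $\mathbf{T}^c_\tau$, so that $\mathbf{D}(\graph_c) = \mathbf{D}(\graph_\tau)\mathbf{T}^c_\tau$ and hence $\mathbf{D}(\graph) = \mathbf{D}(\graph_\tau)\mathbf{R}$ (the Kronecker identity $\mathbf{R}=R\otimes I_k$ following from the mixed-product rule). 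Substituting this into $\lap_{w,s}$ gives $\lap_{w,s} = \mathbf{E}^{-1}\mathbf{D}(\graph_\tau)\,\mathbf{RWR}^T\,\mathbf{D}(\graph_\tau)^T$, and left-multiplying by $\mathbf{D}(\graph_\tau)^T$ yields
\begin{align}
\mathbf{D}(\graph_\tau)^T\lap_{w,s} = \lap_{e,s}\,\mathbf{RWR}^T\,\mathbf{D}(\graph_\tau)^T,
\end{align}
which already exhibits the top-left block $\lap_{e,s}\mathbf{RWR}^T$ and shows that the tree-edge dynamics close on themselves.

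The delicate step — and the one I expect to be the main obstacle — is choosing the complementary rows of $\mathbf{S}$ so that the entire bottom block-row of the transformed matrix vanishes. The naive choice $(\mathbf{1}\otimes I_k)^T$ fails here: because of the time scaling, $(\mathbf{1}\otimes I_k)^T\lap_{w,s}=(\mathbf{1}\otimes I_k)^T\mathbf{E}^{-1}\mathbf{D}(\graph)\mathbf{W}\mathbf{D}(\graph)^T$ does not vanish, since the block of $(\mathbf{1}\otimes I_k)^T\mathbf{E}^{-1}\mathbf{D}(\graph)$ indexed by edge $(i,j)$ equals $E_i^{-1}-E_j^{-1}\neq\mathbf{0}$. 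The correct object is the \emph{left} null space of $\lap_{w,s}$, which is $\range(\mathbf{E}(\mathbf{1}\otimes I_k))$ rather than $\range(\mathbf{1}\otimes I_k)$. I would therefore set
\begin{align}
\mathbf{S} = \begin{bmatrix}\mathbf{D}(\graph_\tau)^T \\ (\mathbf{1}\otimes I_k)^T\mathbf{E}\end{bmatrix},
\end{align}
so that $(\mathbf{1}\otimes I_k)^T\mathbf{E}\,\lap_{w,s} = (\mathbf{1}\otimes I_k)^T\mathbf{D}(\graph)\mathbf{W}\mathbf{D}(\graph)^T = \mathbf{0}$, using $(\mathbf{1}\otimes I_k)^T\mathbf{D}(\graph)=\mathbf{0}$.

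Finally I would check invertibility of $\mathbf{S}$ and assemble the blocks. Any $\mathbf{x}\in\ker\mathbf{S}$ satisfies $\mathbf{x}\in\ker\mathbf{D}(\graph_\tau)^T=\range(\mathbf{1}\otimes I_k)$, say $\mathbf{x}=(\mathbf{1}\otimes I_k)c$; the second block then forces $(\sum_i E_i)c=\mathbf{0}$, and positive-definiteness of $\sum_i E_i$ gives $c=\mathbf{0}$, so $\mathbf{S}$ is invertible. Reading $\mathbf{D}(\graph_\tau)^T\mathbf{S}^{-1}=[\,I\ \ \mathbf{0}\,]$ off of $\mathbf{S}\mathbf{S}^{-1}=I$, the top block-row of $\mathbf{S}\lap_{w,s}\mathbf{S}^{-1}$ becomes $\lap_{e,s}\mathbf{RWR}^T[\,I\ \ \mathbf{0}\,]=[\,\lap_{e,s}\mathbf{RWR}^T\ \ \mathbf{0}\,]$, while the bottom block-row vanishes by the previous step, completing the similarity. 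I would also note in passing that $\lap_{e,s}=\mathbf{D}(\graph_\tau)^T\mathbf{E}^{-1}\mathbf{D}(\graph_\tau)$ is positive definite — hence the top block carries all nonzero eigenvalues — since $\mathbf{D}(\graph_\tau)$ has full column rank and $\mathbf{E}^{-1}\in\mathcal{S}_{++}$.
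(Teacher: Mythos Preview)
Your proposal is correct and follows essentially the same route as the paper: your transformation $\mathbf{S}$ coincides with the paper's $S_v^{-1}=\begin{bmatrix}\mathbf{D}(\graph_\tau)^T\\ \Xi^{-1}F\end{bmatrix}$ up to the harmless diagonal rescaling $\Xi^{-1}$ of the bottom block row (since $(\mathbf{1}\otimes I_k)^T\mathbf{E}=F$), and both arguments hinge on the factorization $\mathbf{D}=\mathbf{D}_\tau\mathbf{R}$ together with $(\mathbf{1}\otimes I_k)^T\mathbf{D}=\mathbf{0}$. The only cosmetic difference is that the paper constructs $S_v$ explicitly and verifies $S_v^{-1}S_v=I$ blockwise, whereas you check invertibility via a kernel argument and read off $\mathbf{D}(\graph_\tau)^T\mathbf{S}^{-1}=[\,I\ \ \mathbf{0}\,]$ directly from $\mathbf{S}\mathbf{S}^{-1}=I$.
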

\begin{proof}
First, we use a lemma, proven in the Appendix.
\begin{lemma}
  \label{lem:1}
  The following hold:
$
    \mathbf{T}_\tau^c = T_\tau^c \otimes I_k,~
    \mathbf{D} = \mathbf{D}_\tau \mathbf{R}.
$
\end{lemma}

\medskip

Following \cite{Zelazo2011a,Foight2019a}, we define a similarity transformation,
\begin{align}
    S_v(\graph) &= \begin{bmatrix}
        \E^{-1}\D(\graph_\tau)\left( \D(\graph_\tau) ^T \E^{-1} \D(\graph_\tau)\right)^{-1} & {\mathbb{1}}
    \end{bmatrix}\\
    S_v(\graph)^{-1} &= \begin{bmatrix}
        \D(\graph_\tau)^T \\ \Xi^{-1} F
    \end{bmatrix},\\
      \mathbb{1}&= \mathbf{1}_n \otimes I_k,~
      F = 
          \begin{bmatrix}
            E_1 & \cdots & E_n
          \end{bmatrix}\\
                  \Xi &= \mathrm{diag}\left(\{\epsilon_{s,i}\}_{i=1}^k\right),~
      \epsilon_{s,i} = \sum_{j=1}^n \epsilon_{j,i}.
    \end{align}
    Note that $\epsilon_{s,i}$ is the sum of all the time scale parameters of the $i$-th substate over all nodes.
    We establish that this transformation is well-defined in the Appendix.
    \begin{lemma}
      \label{lem:2}
      The similarity transforms are well-defined, in that $S_v^{-1}S_v = I$.
    \end{lemma}
    
    Next, denoting for brevity $\D_\tau:= \D(\graph_\tau)$, $\D:=\D(\graph)$, etc, and noting that $\mathbf{D}^T \mathbb{1} = (D^T\mathbf{1})\otimes I_k = \mathbf{0}$, we conclude,
    \begin{align}
        &S_v^{-1} \lap_{w,s}(\graph) S_v= \\
      & \begin{bmatrix}
           \D_\tau^T \E^{-1} \mathbf{D W D}^T \E^{-1} \D_\tau \left( \D_\tau \E^{-1} \D_\tau\right)^{-1} & A\\
           \Xi^{-1} F \E^{-1}\D\W\D^T \E^{-1}\D_\tau \left(\D_\tau \E^{-1} \D_\tau\right)^{-1} & B
       \end{bmatrix}\\
&(\text{where }A= \D_\tau^T \E^{-1} \mathbf{D W D}^T \mathbb{1},~B=\Xi^{-1} F \E^{-1} \D\D^T\mathbb{1})\\
 &=        \begin{bmatrix}
   \D_\tau^T E^{-1} \D_\tau \mathbf{R W R}^T \D_\tau^T \E^{-1} \D_\tau \left( \D_\tau \E^{-1} \D_\tau\right)^{-1} & \mathbf{0}\\
   \Xi^{-1} \mathbb{1}^T\D_\tau \R\W\R^T & \mathbf{0}
        \end{bmatrix}\\
&=         \begin{bmatrix}
        \lap_{e,s} \mathbf{R W R}^T & \mathbf{0}\\ \mathbf{0} & \mathbf{0}
        \end{bmatrix}.
    \end{align}
\end{proof}

By noting that $S_v \mathbf{x}_e(t) = \mathbf{x}(t)$, the scaled, matrix weighted consensus model with noise~\eqref{eq:consensus} is equivalent to,
\begin{equation} \label{eq:edgemodel}
    \begin{aligned}
    \dot{\mathbf{x}}_e(t) & = \begin{bmatrix} -\lap_{e,s}(\graph_\tau) \mathbf{R}(\graph) \mathbf{W} \mathbf{R}(\graph)^T & \mathbf{0} \\ \mathbf{0} & \mathbf{0} \end{bmatrix} \mathbf{x}_e(t)  \\
    & \quad + \begin{bmatrix} \D_{\tau}^T \E^{-1} & -\lap_{e,s}(\graph_\tau) \mathbf{R}(\graph) \\ \Xi^{-1} \mathbb{1}^T & {\mathbf{0}} \end{bmatrix} \begin{bmatrix} \omega(t) \\ \mathbf{v}(t) \end{bmatrix} \\
    \mathbf{z}(t) & = \begin{bmatrix} \R(\graph) & {\mathbf{0}} \end{bmatrix} \mathbf{x}_e(t).
    \end{aligned}
\end{equation}
\noindent We can see that the form of~\eqref{eq:edgemodel} naturally suggests a partitioning of the edge state variable into a set of states in the spanning tree and those in the consensus space (span$(\mathbf{1}\otimes I_k)$), $\mathbf{x}_e(t) = \begin{bmatrix} \mathbf{x}_\tau(t) & \mathbf{x}_{\mathbf{1}}(t) \end{bmatrix}$. The resulting dynamics for the spanning tree states is taken from~\eqref{eq:edgemodel} as,
\begin{equation} \label{eq:treemodel}
\Sigma_\tau := \left\{
    \begin{aligned}
    \dot{\mathbf{x}}_\tau (t) & = -\lap_{e,s}(\graph_\tau) \R(\graph) \mathbf{W} \R(\graph)^T \mathbf{x}_\tau (t) \\
    & \quad + \D_{\tau}^T \E^{-1} \Omega \hat{\omega} - \lap_{e,s}(\graph_\tau) \R(\graph) \Gamma \hat{\mathbf{v}} \\
    \mathbf{z}(t) & = \R(\graph)^T \mathbf{x}_\tau(t),
    \end{aligned} \right.
\end{equation}
\noindent where $\hat{\mathbf{v}}$ and $\hat{\omega}$ are normalized noise signals, $\Omega = \mathbb{E}\left[\omega(t) \omega(t)^T\right]$, and $\Gamma = \mathbb{E}\left[\mathbf{v}(t) \mathbf{v}(t)^T\right]$. An important note is that the chosen output for~\eqref{eq:model} results in the output of~\eqref{eq:treemodel} containing information of the cycle states due to the fact that the cycle states are linear combinations of the tree states. 




The $\mathcal{H}_2$ performance of~\eqref{eq:treemodel} is given by $\mathbf{tr}(R^T X^\star R)$, where $X^\star$ is the positive-definite solution to the Lyapunov equation,
\begin{align}
& -\lap_{e,s}^\tau \R \W \R^T X - X \R \W \R^T \lap_{e,s} ^\tau + \D_{\tau}^T \E^{-1} \Omega \Omega^T \E^{-1} \D_{\tau} + \nonumber \\
& \quad \lap_{e,s}^\tau \R \Gamma \Gamma^T \R^T \lap_{e,s}^\tau = 0. \label{eq:lyapunov}
\end{align}
In general, the addition of the matrix weighting and scaling precludes a closed form solution to~\eqref{eq:lyapunov} (which is desirable to find $X$'s dependence on $\E,\W$), and numeric results yield a nontrivial mixing of weights and scaling parameters in the entries of $X$. However, in the following section we will outline a case when analytic solutions to~\eqref{eq:lyapunov} exist, providing insights for design of edge weights and scaling parameters for optimal performance.


\section{$\Htwo$ Performance and Design Problems}

\subsection{$\mathcal{H}_2$ Performance} \label{sec:h2-results}

In this section, we discuss the $\mathcal{H}_2$ performance for the models of edge consensus in the cases of nodes with time scales, and matrix weighted edges. Specifically, we identify an interesting case where explicit solutions to~\eqref{eq:lyapunov} can be found by an appropriate choice of noise covariances, and then discuss how the general solution can be approximated by this choice.
%

By inspection of~\eqref{eq:lyapunov}, we can note that by selecting the covariance matrices $\Omega = \sigma_\omega \E^{1/2}$ and $\Gamma = \sigma_v \W^{1/2}$, we can find an analytic solution for~\eqref{eq:lyapunov} in line with those in~\cite{Zelazo2011a,Foight2019a}.
With this choice,~\eqref{eq:lyapunov} has the solution,
\begin{equation} \label{eq:lysol}
X^\star = \frac{1}{2}\left(\sigma_w^2 (\R \W \R^T)^{-1} + \sigma_v^2 \lap_{e,s}^\tau \right).
\end{equation}
This solution is of particular interest because the edge and node weightings are separated in their effect on the $\mathcal{H}_2$ performance, save for the placement of the $\sigma_{\omega}$ and $\sigma_v$ parameters (that is, the effective covariance parameter of the process noise is a ``node'' parameter, but multiplies the term containing the edge weighting in~\eqref{eq:lysol}, and vice versa). Note that while this choice of $\Omega$ and $\Gamma$ allows for the analytic solution~\eqref{eq:lysol}, this solution is merely a proxy for the true performance of the system; for it to be useful for analysis, the error induced by the choice of covariances needs to be assessed.

Consider the scenario in which we have some given covariance matrices, $\Omega_T$ and $\Gamma_T$, which when used to solve~\eqref{eq:lyapunov} yield the ``true'' performance of the system.
These matrices may or may not be known to us; in either case a relevant endeavor is the quantification of the error incurred by estimating the true performance by~\eqref{eq:lysol}.
The method we will use to quantify this error depends on the positive-definite ordering of performances given by covariance matrices.
The following lemma, proven in the Appendix, provides this ordering. 

\begin{lemma}[Ordering of Performance] \label{lem:ordering}
For covariance matrices satisfying,
$
\underline{\Omega} \preceq \Omega \preceq \bar{\Omega}$ and $
\underline{\Gamma} \preceq \Gamma \preceq \bar{\Gamma},
$
\noindent then denoting the solution to~\eqref{eq:lyapunov} using $(\underline{\Omega},\underline{\Gamma});\ (\Omega,\Gamma);\ (\bar{\Omega},\bar{\Gamma})$ as $\underline{X};\ X;\ \bar{X}$, we have, $\underline{X} \preceq X \preceq \bar{X}$.
From this we can conclude,
\[
\mathbf{tr}(R^T \underline{X} R) \leq \mathbf{tr}(R^T X R) \leq \mathbf{tr}(R^T \bar{X} R).
\]
\end{lemma}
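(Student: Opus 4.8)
The plan is to read~\eqref{eq:lyapunov} as an ordinary Lyapunov equation and lean on the monotone dependence of its solution on the forcing term. Setting $F = -\lap_{e,s}^\tau\R\W\R^T$ and gathering the noise-dependent data into $Q(\Omega,\Gamma) = \D_\tau^T\E^{-1}\Omega\Omega^T\E^{-1}\D_\tau + \lap_{e,s}^\tau\R\Gamma\Gamma^T\R^T\lap_{e,s}^\tau$, equation~\eqref{eq:lyapunov} becomes $FX + XF^T + Q(\Omega,\Gamma)=0$. First I would record that $F$ is Hurwitz: $\lap_{e,s}^\tau=\D_\tau^T\E^{-1}\D_\tau$ and $\R\W\R^T$ are both symmetric positive definite (full column rank of $\D_\tau$, full row rank of $\R$, together with $\E,\W\succ 0$), so their product is similar to the symmetric positive-definite matrix $(\lap_{e,s}^\tau)^{1/2}\R\W\R^T(\lap_{e,s}^\tau)^{1/2}$ and hence has strictly positive spectrum; negating places $F$ in the open left half-plane. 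The unique solution then admits the integral form
\[
X = \int_0^\infty e^{Ft}\,Q(\Omega,\Gamma)\,e^{F^T t}\diff,
\]
which is the workhorse of the argument.

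The key structural fact is that $X$ is monotone in its forcing term. If $Q_1\preceq Q_2$, then for each $t$ the integrand difference $e^{Ft}(Q_2-Q_1)e^{F^T t}$ is a congruence of a positive-semidefinite matrix and is therefore positive semidefinite; integrating preserves this, giving $X_2-X_1\succeq 0$. Hence the lemma reduces to showing that the hypothesized ordering of the covariance data propagates to an ordering $\underline Q\preceq Q\preceq\bar Q$ of the forcing terms, where $\underline Q = Q(\underline\Omega,\underline\Gamma)$, $\bar Q = Q(\bar\Omega,\bar\Gamma)$.

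To obtain that ordering I would observe that each summand of $Q$ is a congruence image of a noise-intensity matrix: the process term equals $V_1^T(\Omega\Omega^T)V_1$ with $V_1=\E^{-1}\D_\tau$, and the measurement term equals $V_2^T(\Gamma\Gamma^T)V_2$ with $V_2=\R^T\lap_{e,s}^\tau$ (using the symmetry of $\E$ and $\lap_{e,s}^\tau$). Since $M\mapsto V^TMV$ preserves the semidefinite order, $\underline Q\preceq Q\preceq\bar Q$ follows as soon as $\underline\Omega\,\underline\Omega^T\preceq\Omega\Omega^T\preceq\bar\Omega\,\bar\Omega^T$ and the corresponding chain for $\Gamma$ hold. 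This is the step I expect to be the main obstacle, because the map $A\mapsto AA^T$ is \emph{not} operator monotone, so $\underline\Omega\preceq\Omega$ does not by itself force $\underline\Omega\,\underline\Omega^T\preceq\Omega\Omega^T$. I would close this gap using the structure at hand: for the covariance choices of interest the bounds share a common eigenbasis (for instance $\Omega=\sigma_\omega\E^{1/2}$ gives $\Omega\Omega^T=\sigma_\omega^2\E$), and on a common eigenbasis squaring \emph{is} order-preserving; equivalently, one may phrase the hypotheses directly on the intensities $\Omega\Omega^T$ and $\Gamma\Gamma^T$ that actually enter~\eqref{eq:lyapunov}.

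With $\underline X\preceq X\preceq\bar X$ in hand, the performance ordering is immediate. Conjugation by $R$ is again a congruence, so $R^T\underline X R\preceq R^T X R\preceq R^T\bar X R$, and since the trace is monotone on the positive-semidefinite cone ($0\preceq A$ implies $0\le\tr A$, as $\tr A$ is the sum of nonnegative eigenvalues), taking traces yields $\tr(R^T\underline X R)\le\tr(R^T X R)\le\tr(R^T\bar X R)$, which is the claimed ordering of performances.
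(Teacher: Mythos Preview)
Your argument follows the same skeleton as the paper's proof: rewrite~\eqref{eq:lyapunov} as $FX+XF^T+Q=0$ with $F=-\lap_{e,s}^\tau\R\W\R^T$ Hurwitz, invoke the integral representation $X=\int_0^\infty e^{Ft}Q\,e^{F^Tt}\diff$, and use that this map is monotone in the forcing term $Q$. The final congruence-plus-trace step is handled the same way in both.

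Where you diverge from the paper is exactly where you should. You correctly note that $\underline\Omega\preceq\Omega$ does \emph{not} in general force $\underline\Omega\,\underline\Omega^{T}\preceq\Omega\Omega^{T}$, since the matrix square is not operator monotone, and you propose to close the gap via the commuting structure of the specific covariances actually used downstream (e.g.\ scalar multiples of $\E^{1/2}$ and $\W^{1/2}$), or equivalently by phrasing the hypotheses on the intensities $\Omega\Omega^{T},\Gamma\Gamma^{T}$. The paper does not treat this step carefully: it writes $u^T X_p u=\int_0^\infty\|Z^{T}B_p^{T}e^{A^Tt}u\|_2^{2}\diff$ and then bounds $\|Z^{T}v\|_2^{2}-\|\bar Z^{T}v\|_2^{2}$ above by $(\|Z\|_2^{2}-\|\bar Z\|_2^{2})\|v\|_2^{2}$, which implicitly uses $\|\bar Z^{T}v\|_2^{2}\geq\|\bar Z\|_2^{2}\|v\|_2^{2}$ and is not valid in general. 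So your caution here is an improvement rather than a gap on your side; the lemma as literally stated needs precisely the extra structure you identify, and with that structure both your argument and the paper's intended argument go through.
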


We can employ Lemma~\ref{lem:ordering} to place a bound on the potential error of making the assumption that gives~\eqref{eq:lysol}. Observe that it is possible to choose multiplicative factors such that,
\begin{align*}
\underline{\alpha} \E^{1/2} & \preceq \Omega_T \preceq \bar{\alpha} \E^{1/2} \\
\underline{\beta} \W^{1/2} & \preceq \Gamma_T \preceq \bar{\beta} \W^{1/2}.
\end{align*}
From Lemma~\ref{lem:ordering}, we know that the true performance will lie within the performances calculated using $(\underline{\alpha} \E^{1/2},\underline{\beta} \W^{1/2})$ and $(\bar{\alpha} \E^{1/2},\bar{\beta} \W^{1/2})$. Thus, taking the difference between the maximum performance and the minimal performance gives the worst case error of~\eqref{eq:lysol}, which is given by,
\begin{equation} \label{eq:perf_bound}
\begin{aligned}
\mathbf{tr}(R^T \bar{X}^{\ast} R) - \mathbf{tr}(R^T \underline{X}^{\ast} R) & = \frac{1}{2} (\bar{\alpha} - \underline{\alpha})\sigma_w^2 \mathbf{tr}((\R \W \R^T)^{-1}) \\
& + \frac{1}{2} (\bar{\beta} - \underline{\beta})\sigma_v^2 \mathbf{tr}(\lap_{e,s}^\tau).
\end{aligned}
\end{equation}
From this result, we can see that the relative error will be determined by the relative sizes of the multiplicative factors, which raises the question of how the factors can be found or chosen. From the definition of the necessary ordering, it is of course sufficient that,
\begin{equation} \label{eq:sufficent}
\begin{aligned}
\bar{\alpha} & = \frac{\lambda_{\max}(\Omega_T)}{\epsilon_{\max}^{1/2}};\ \underline{\alpha} = \frac{\lambda_{\min}(\Omega_T)}{\epsilon_{\min}^{1/2}} \\
\bar{\beta} & = \frac{\lambda_{\max}(\Gamma_T)}{\lambda_{\max}(\W)^{1/2}};\ \underline{\beta} = \frac{\lambda_{\min}(\Gamma_T)}{\lambda_{\min}(\W)^{1/2}}.
\end{aligned}
\end{equation}
The sufficiency, as opposed to necessity, of the parameters given in~\eqref{eq:sufficent} results in conservative performance bounds. The bounds can be tightened by solving for the minimal/maximal parameters via a simple (convex) optimization problem of the form (for $\bar{\alpha}$),
\begin{equation} \label{eq:optparam}
\begin{aligned}
\min_{\alpha}\ &\ \alpha \\
\text{s.t.}\ &\ \Omega_T - \alpha E^{1/2} \preceq 0.
\end{aligned}
\end{equation}
\noindent The above problem can be modified to give the minimal/maximal values of $\bar{\beta}$ and $\underline{\alpha}/\underline{\beta}$. While~\eqref{eq:optparam} gives an obvious advantage over~\eqref{eq:sufficent}, it does require complete information about the covariances, whereas~\eqref{eq:sufficent} requires only the spectral bounds of the true covariances. In either case, however, it can be seen that the worst case error is helped when the true covariances and the assumed covariances have similar spectral bounds, that is, if the minimum and maximum eigenvalues of $\Omega_T/\Gamma_T$ are approximately equal to those of $\sigma_w E^{1/2}/\sigma_v W^{1/2}$, the necessary multiplicative factors will be approximately unity and the possible discrepancy small (within a factor of $\simeq$5-10). We can see this numerically for random graphs on $n=20$ nodes with $k=2$ in Figure~\ref{fig:perfbnd}, where the true covariances, edge weights, and node time scales were randomly generated then scaled to align maximum/minimum eigenvalues. Numerical results over a range of $n$ and $k$ suggest that as the number of substates increases, the bounds (calculated via~\eqref{eq:optparam}) become more conservative.

\begin{figure}
  \centering
  \includegraphics[width=0.98\columnwidth]{./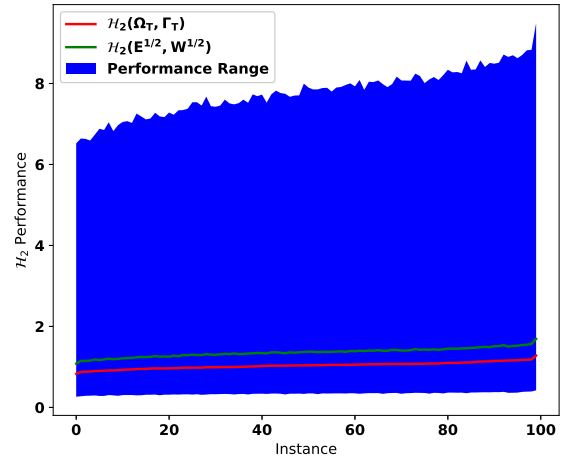}
  \caption{Numerically calculated $\Htwo$ performance for random graphs ($n=20,k=2$) with random covariances ($\Htwo(\Omega_T,\Gamma_T)$) and the solution to~\eqref{eq:lysol} ($\Htwo(\E^{1/2},\W^{1/2})$). The blue shaded region represents the possible range in performance, calculated using the parameters found via optimization problems given by~\eqref{eq:optparam}.}
  \vspace{-5mm}
  \label{fig:perfbnd}
\end{figure}
While the results of Lemma~\ref{lem:ordering} (along with~\eqref{eq:sufficent} or~\eqref{eq:optparam}) give the ability to assess whether or not~\eqref{eq:lysol} is an acceptable proxy for the true performance, there are no explicit guarantees that the bounds are tight enough for all applications. In cases where there is large discrepancy between the spectral bounds of the true covariances and the time scale or edge weight matrices, for example,~\eqref{eq:lysol} may not be useful for performance estimation or optimization. For the cases where it is an appropriate proxy, however, we proceed with time scale and edge weight design to promote minimal $\Htwo$ performance, starting with a remark pertaining to the special case of tree graphs.
%
%
\begin{remark}

When the underlying graph topology is a tree, $\R = I$, and~\eqref{eq:lysol} simplifies to,

\begin{equation*}
X^\star = \frac{1}{2}\left(\sigma_{\omega}^2 \W^{-1} + \sigma_v^2 \lap_{e,s}^\tau \right).
\end{equation*}

\noindent Furthermore, in this case $\mathcal{H}_2(\Sigma_\tau) = \mathbf{tr}(X^\star)$. A closed form solution for the performance in this case is given in the following lemma. 
\end{remark}

\begin{lemma} For a tree graph, the $\Htwo$ norm of the $\Sigma_{\tau}$~\eqref{eq:treemodel} system is given by,
\begin{align}
\mathcal{H}_2(\Sigma_\tau) & = \frac{1}{2} \mathbf{tr}\left(\sigma_{\omega}^2 \W^{-1} + \sigma_v^2 \lap_{e,s}^\tau \right) \nonumber \\
& = \frac{1}{2} \left( \sigma_{\omega}^2 \sum_{j=1}^{n-1} \mathbf{tr}\left(W_{j}^{-1}\right) + \sigma_v^2 \sum_{l=1}^{k} \sum_{i = 1}^{n} \frac{\text{deg}(\nu_i)}{\epsilon_{i,l}} \right), \label{eq:treeresult}
\end{align}
\noindent where deg$(\nu_i)$ is the unweighted degree of agent $\nu_i$, and $j$ is the index over the edges. 
\end{lemma}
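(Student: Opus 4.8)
The plan is to start from the simplified Lyapunov solution recorded in the preceding remark and reduce the two matrix traces appearing there to the claimed scalar sums; the content is almost entirely structural bookkeeping. The first equality in~\eqref{eq:treeresult} is immediate: the remark notes that for a tree $\R = I$, so the general performance expression $\mathbf{tr}(R^T X^\star R)$ collapses to $\mathbf{tr}(X^\star)$, and substituting the simplified solution $X^\star = \tfrac{1}{2}\bigl(\sigma_\omega^2 \W^{-1} + \sigma_v^2 \lap_{e,s}^\tau\bigr)$ together with linearity of the trace produces the first line. It then remains to evaluate the two traces $\mathbf{tr}(\W^{-1})$ and $\mathbf{tr}(\lap_{e,s}^\tau)$ separately.

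For the weight term I would use the block-diagonal structure of $\W$. A tree on $n$ nodes has exactly $n-1$ edges, so $\W = \mathrm{Blkdiag}(W_1,\dots,W_{n-1})$ with each $W_j \in \mathcal{S}_{++}^k$; hence $\W^{-1} = \mathrm{Blkdiag}(W_1^{-1},\dots,W_{n-1}^{-1})$ and $\mathbf{tr}(\W^{-1}) = \sum_{j=1}^{n-1}\mathbf{tr}(W_j^{-1})$. This accounts for the first summand in the claim.

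For the scaled edge-Laplacian term I would first apply cyclic invariance of the trace to write $\mathbf{tr}(\lap_{e,s}^\tau) = \mathbf{tr}\bigl(\D(\graph_\tau)^T \E^{-1} \D(\graph_\tau)\bigr) = \mathbf{tr}\bigl(\E^{-1}\D(\graph_\tau)\D(\graph_\tau)^T\bigr)$, and then use the Kronecker definition $\D(\graph_\tau) = D(\graph_\tau)\otimes I_k$ to obtain $\D(\graph_\tau)\D(\graph_\tau)^T = \bigl(D(\graph_\tau)D(\graph_\tau)^T\bigr)\otimes I_k$. The key observation is that $D(\graph_\tau)D(\graph_\tau)^T$ is precisely the unweighted graph Laplacian of the tree, whose diagonal entries are the node degrees $\text{deg}(\nu_i)$, so the diagonal blocks of the Kronecker product are $\text{deg}(\nu_i)\,I_k$. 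Since $\E^{-1}$ is diagonal with entries $\epsilon_{i,l}^{-1}$ indexed by node $i$ and substate $l$, and since the trace of a product against a diagonal matrix selects only diagonal entries, this pairing yields exactly $\sum_{l=1}^{k}\sum_{i=1}^{n}\text{deg}(\nu_i)/\epsilon_{i,l}$, the second summand.

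The only substantive steps are recognizing $D(\graph_\tau)D(\graph_\tau)^T$ as the unweighted Laplacian (so that its diagonal is the degree sequence) and the elementary fact that pairing a diagonal matrix against another under the trace reads off diagonal entries. The main place requiring care, rather than any genuine obstacle, is keeping the node-major ordering consistent between the Kronecker factor $I_k$ and the block indexing of $\E^{-1}$, so that substate $l$ of node $i$ is matched with $\text{deg}(\nu_i)$ across all $l = 1,\dots,k$; once this indexing is fixed the two computations combine to give~\eqref{eq:treeresult}.
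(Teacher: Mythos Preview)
Your proof is correct. The treatment of the weight term is identical to the paper's. For the time scale term you take a slightly different (and arguably cleaner) route: you use cyclic invariance of the trace to pass from the edge-indexed object $\D_\tau^T\E^{-1}\D_\tau$ to the node-indexed object $\E^{-1}\D_\tau\D_\tau^T$, recognize $\D_\tau\D_\tau^T$ as the unweighted Laplacian lifted by $\otimes I_k$, and then read off the diagonal pairing against $\E^{-1}$ directly. The paper instead stays on the edge side, computing each diagonal entry $[\lap_{e,s}^\tau]_{(ql)(ql)} = a_q^T E_l^{-1} a_q = \epsilon_{i,l}^{-1} + \epsilon_{j,l}^{-1}$ for the edge $q=\kappa(ij)$ and then counting how many times each $\epsilon_{i,l}^{-1}$ appears across all edges (namely $\deg(\nu_i)$ times). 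Your cyclic-trace manoeuvre avoids that counting step at the cost of invoking the identification of $D_\tau D_\tau^T$ with the unweighted Laplacian; in fact the paper itself uses precisely your identity $\mathbf{tr}(\R^T\lap_{e,s}^\tau\R)=\mathbf{tr}(\E^{-1}\lap)$ later in the proof of Proposition~\ref{prop:2}, so both arguments are in its toolkit.
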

\begin{proof}
First consider the weight term. $\W$ is block-diagonal, so $\W^{-1} = \mathrm{Blkdiag}(W_1^{-1},\dots,W_{n-1}^{-1})$. Thus, its trace is the sum of the traces of the edge weight matrix inverses. Now, consider a single layer of substates, denoted by $l$. For the second term, consider one of the diagonal elements of $\lap_{e,s}$,
\begin{equation*}
[L_{e,s}^\tau]_{(ql)(ql)} = a_q^T E_l^{-1} a_q = \epsilon_{i,l}^{-1} + \epsilon_{j,l}^{-1},
\end{equation*}
\noindent where $a_q$ is the edge vector corresponding to the edge between nodes $i$ and $j$, that is, $q = \kappa(ij)$. Now consider a single node, $\nu_i$. In the sum over all edges of the graph, $\epsilon_{i,l}^{-1}$ will appear once for every edge that connects $\nu_i$ to its neighbors, which is the unweighted degree of $\nu_i$. Considering all other nodes gives the result for the second term. Finally, the preceding argument holds for all the sub-state layers, which gives the sum over all sub-states. 
\end{proof}







From this result, we can see that there exists a trade off between the time scale parameters and the topology (in this case, the degree distribution) which determines the overall performance of the network. Also, we can contrast the influence of time scale parameters and edge weights in this case. For a given distribution of scaling parameters and edge weights, changing the assignment of edge weights does not affect the $\mathcal{H}_2$ performance contribution from a given sub-state layer. However, the assignment of scaling parameters can have a significant effect on the performance of the network, which is in line with the similar results in the context of single-input influenced consensus~\cite{Foight2019}. 

We can also see that the performance contribution from the time scale parameters in the matrix weighted case is identical to considering a network with $k$ disconnected layers, where each layer has its own time scale distribution. Thus, the evaluation of the time scale assignment in the matrix weighted case is effectively the same as considering assignment in the scalar case. With this in mind, we direct interested readers to~\cite{Foight2019a} for an example of this assignment in action.

\subsection{Gradient Updates on Edge Weights} \label{sec:gu-design}

In the previous section, we saw that one could separate the contributions of the time scales and the edge weights on the $\mathcal{H}_2$ norm.
We now present a design problem for optimizing the edge weight term of Equation~\eqref{eq:lysol}.
Consider Problem~\eqref{eq:update2},
\begin{align}
\begin{aligned}
\min_{\{W_i\}_{i=1}^{|\Edges|}} & \mathbf{tr}\left(\R^T (\R\W\R^T)^{-1} \R\right) + \frac{h}{2}\sum_{e\in\Edges}\mathbf{tr}\left[\W_e^T\W_e\right]^2 \\
\text{s.t.}\ & {W_{\min}} \preceq {W_e} \preceq {W_{\max}}, \forall e \in \Edges \\
&\ W = \mathrm{blkdiag}(W_i).
\end{aligned} \label{eq:update2} \tag{P1} 
\end{align}
We include a regularization term in the cost function to avoid the trivial solution of completely disconnecting the graph, as well as upper/lower bounds on the matrix weights.
A gradient update for solving Problem~\eqref{eq:update2} is derived in Proposition~\ref{prop:1}, and proven in the Appendix.

\begin{proposition}[Gradient Update for Edge Weights]
\label{prop:1}
The gradient of the cost function with respect to the edge weight $W_e$ in Problem~\eqref{eq:update2} is given by 
\begin{align}
  \nabla_{W_e} f[H] = - \mathrm{deblk}_e^k\left[Q^TQ\right] + h W_e,
\end{align}
where $\mathrm{deblk}_e^k[Q^TQ]$ is the $e$th $k\times k$ diagonal block of $Q^TQ$, and $Q$ is given by 
\begin{align}
  Q &= \R^T \left( \R \W_H^c \R^T \right)^{-1} \R\\
  \W_H^c&= \mathrm{blk}_c^k (H) + \sum_{l\in\Edges\setminus c} \mathrm{blk}_l^k(W_l),
\end{align}
where
\begin{align}
  \mathrm{blk}_c^k(H) =  \mathbf{e}_c  H \mathbf{e}_c^T = \left( e_c\otimes I_k\right) H \left( e_c^T \otimes I_k\right)
\end{align}
denotes the $kn\times kn$ matrix with $H$ on the $c$th $k\times k$ diagonal block, with zeros otherwise.

A gradient update scheme for solving Problem~\eqref{eq:update2} is therefore 
\begin{align}
  W_e^{k+1} &= W_e^k - \dfrac{1}{h\sqrt{k}} \nabla_{W_e} f[H]\\
            &= W_e^k - \dfrac{1}{h\sqrt{k}} \left(h W_e - \mathrm{deblk}_e^k\left[Q^TQ\right] \right).
\end{align}
\end{proposition}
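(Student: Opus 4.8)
The plan is to write the objective of Problem~\eqref{eq:update2} as $f = g + r$, where $g(\W) = \mathbf{tr}\!\left(\R^T (\R\W\R^T)^{-1}\R\right)$ is the performance term and $r$ is the regularizer, and to compute the block gradient of each summand separately by a matrix-differential argument. The regularizer is immediate: up to the constant $h/2$ it is a sum of squared Frobenius norms of the blocks, $\mathbf{tr}[W_e^T W_e] = \|W_e\|_F^2$, so its block gradient is $\nabla_{W_e} r = hW_e$, which accounts for the $+hW_e$ in the claimed formula. All the substance is therefore in differentiating $g$.

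First I would isolate the dependence of $g$ on a single block. Writing $\W = \sum_{l\in\Edges}\mathbf{e}_l W_l \mathbf{e}_l^T$ with $\mathbf{e}_l = e_l\otimes I_k$, the matrix $M := \R\W\R^T$ is positive definite (since $\W\succ 0$ by the box constraint $W_{\min}\preceq W_e$ and $\R = R\otimes I_k$ has full row rank because $R$ spans the cut space), so $M^{-1}$ exists and, by cyclicity of the trace, $g = \mathbf{tr}(M^{-1} P)$ with $P := \R\R^T$. Perturbing only the $e$-th block, $W_e\mapsto W_e + dW_e$, gives $dM = \R\,\mathbf{e}_e (dW_e)\,\mathbf{e}_e^T\R^T$, and applying $d(M^{-1}) = -M^{-1}(dM)M^{-1}$ together with cyclicity yields
\begin{align*}
dg &= -\mathbf{tr}\!\left(M^{-1}(dM)M^{-1}P\right)
   = -\mathbf{tr}\!\left(\mathbf{e}_e^{T}\R^{T}M^{-1}PM^{-1}\R\,\mathbf{e}_e\,dW_e\right).
\end{align*}
Next I would identify the bracketed coefficient with a diagonal block of $Q^TQ$. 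Since $M$ and $P$ are symmetric, $Q^TQ = \R^T M^{-1}\R\,\R^T M^{-1}\R = \R^T M^{-1}PM^{-1}\R$, so the coefficient is exactly $\mathbf{e}_e^T (Q^TQ)\,\mathbf{e}_e = (e_e^T\otimes I_k)(Q^TQ)(e_e\otimes I_k)$, namely the $e$-th $k\times k$ diagonal block $\mathrm{deblk}_e^k[Q^TQ]$. Reading off the gradient from $dg = \mathbf{tr}(\nabla_{W_e}g\,dW_e)$, and noting this coefficient is symmetric so no transpose intervenes, gives $\nabla_{W_e}g = -\mathrm{deblk}_e^k[Q^TQ]$; adding $\nabla_{W_e}r = hW_e$ produces the stated gradient. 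The evaluation-at-$H$ notation $\W_H^c$ in the statement simply records that $Q$ is built with the differentiated block set to its current value, after which the gradient-descent step $W_e^{k+1}=W_e^k-\frac{1}{h\sqrt{k}}\nabla_{W_e}f[H]$ is the standard descent update.

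The main obstacle I anticipate is purely the Kronecker/block bookkeeping: verifying that conjugating $Q^TQ$ by $\mathbf{e}_e = e_e\otimes I_k$ genuinely extracts the intended $k\times k$ diagonal block (and not a permuted object), and confirming the symmetry of $\R^T M^{-1}PM^{-1}\R$ so the gradient is identified cleanly. Everything else is routine matrix calculus, and the invertibility of $M$ needed to justify the differential is supplied for free by the positivity constraint on the weights together with the cut-space rank of $R$.
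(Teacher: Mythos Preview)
Your proposal is correct and follows essentially the same approach as the paper: both compute the differential of $\mathbf{tr}\!\left(\R^T(\R\W\R^T)^{-1}\R\right)$ with respect to a single block via $d(M^{-1})=-M^{-1}(dM)M^{-1}$ and cyclicity of the trace, then identify the gradient as $-\mathbf{e}_e^T Q^TQ\,\mathbf{e}_e$. The only cosmetic difference is that the paper packages the computation as a chain rule through three explicitly named maps $h\circ g\circ f$, whereas you compute the differential in one pass; you also spell out the regularizer contribution $hW_e$, which the paper's appendix leaves implicit.
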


\subsection{Decentralized Time Scale Assignment} \label{sec:ts-design}

We saw previously in the definition of~\eqref{eq:update2} that a regularization term was included to prevent the trivial solution of disconnecting the graph. In the optimization of the time scale term of Equation~\eqref{eq:lysol}, this trivial solution takes the form of all agents/substates adopting the slowest time scale parameter possible. Thus, consider~\eqref{eq:update},
\begin{align}
\begin{aligned}
\min_{\epsilon_{1,1}^{-1},\dots,\epsilon_{n,k}^{-1}}\ &\ \frac{1}{2} \mathbf{tr}\left(\R^T \lap_{e,s}^\tau  \R\right) + \frac{h}{2}\sum_{i=1}^n \sum_{j=1}^k \epsilon_{i,j}^r \\
\text{s.t.}\ &\ {\epsilon}^{-1}_{\max} \leq {\epsilon_{i,j}}^{-1} \leq {\epsilon}^{-1}_{\min}\ \forall i \in \Nodes,\ j \in [k].
\end{aligned} \label{eq:update} \tag{P2} 
\end{align}
This is a minimization of the time scale portion of the separated $\mathcal{H}_2$ performance. A regularization term $2^{-1}h \sum_{i=1}^n \sum_{j=1}^k \epsilon_{i,j}^r$ penalizes large time scales for all nodes and their substates assuming positive, integer $r$. In the following proposition (proven in the Appendix), we show an analytic solution for the optimal time scale assignment which minimizes the $\mathcal{H}_2$ performance. 

\begin{proposition}[Analytic Optimal Time Scale Assignment]
\label{prop:2}
Consider~\eqref{eq:update}. Let the region defined by the box constraints on $1/\epsilon_{i,j}$ be denoted by $\mathcal{C}$. Then, the minimizing assignment of time scale parameters is given by,
\[
\epsilon_{i,j}^\ast = \text{Proj}_\mathcal{C}\left[\left(\frac{\text{deg}(\nu_i)}{h r}\right)^{\frac{1}{r+1}} \right].
\]
\end{proposition}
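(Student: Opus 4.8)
The plan is to reduce the objective of~\eqref{eq:update} to a sum that is separable across the individual time-scale parameters, and then to solve the resulting one-dimensional convex problems in closed form. The crucial preliminary step is to evaluate the trace term. Since $\R$ depends only on the graph topology and not on $\E$, and since Lemma~\ref{lem:1} gives $\D = \D_\tau\R$, I would write
\[
\mathbf{tr}(\R^T \lap_{e,s}^\tau \R) = \mathbf{tr}\!\left((\D_\tau\R)^T \E^{-1}(\D_\tau\R)\right) = \mathbf{tr}(\E^{-1}\D\D^T).
\]
Because $\D = D\otimes I_k$, we have $\D\D^T = (DD^T)\otimes I_k$, whose $i$-th diagonal block is $\text{deg}(\nu_i)\,I_k$; pairing this against the block-diagonal $\E^{-1} = \mathrm{Blkdiag}(E_1^{-1},\dots,E_n^{-1})$ gives
\[
\mathbf{tr}(\R^T \lap_{e,s}^\tau \R) = \sum_{i=1}^n \text{deg}(\nu_i)\,\mathbf{tr}(E_i^{-1}) = \sum_{i=1}^n\sum_{j=1}^k \frac{\text{deg}(\nu_i)}{\epsilon_{i,j}}.
\]

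With this, the objective of~\eqref{eq:update} becomes $\sum_{i,j}\left(\tfrac12\text{deg}(\nu_i)\,\epsilon_{i,j}^{-1} + \tfrac{h}{2}\epsilon_{i,j}^{r}\right)$, a sum of terms each involving a single variable, subject only to the per-variable box constraint $\epsilon_{\min}\le \epsilon_{i,j}\le\epsilon_{\max}$ (equivalently the box on $1/\epsilon_{i,j}$ defining $\mathcal{C}$). Hence the problem decouples into $nk$ independent scalar minimizations of $g_{ij}(\epsilon) = \tfrac12\text{deg}(\nu_i)\,\epsilon^{-1} + \tfrac{h}{2}\epsilon^{r}$ over the interval. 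For positive integer $r$ both $\epsilon^{-1}$ and $\epsilon^{r}$ are convex on $(0,\infty)$, so each $g_{ij}$ is convex.

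I would then set $g_{ij}'(\epsilon) = -\tfrac12\text{deg}(\nu_i)\,\epsilon^{-2} + \tfrac{hr}{2}\epsilon^{r-1} = 0$, which yields the unconstrained stationary point $\epsilon = (\text{deg}(\nu_i)/(hr))^{1/(r+1)}$; convexity certifies this as the global minimizer of $g_{ij}$ on $(0,\infty)$. Finally, since each $g_{ij}$ is convex and the feasible set is an interval, the constrained minimizer is obtained by clamping the unconstrained minimizer to the interval, i.e. by projecting onto $\mathcal{C}$ (monotonicity of the reciprocal map makes the projection on $\epsilon_{i,j}$ and on $1/\epsilon_{i,j}$ agree). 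This gives exactly the stated assignment. I expect the main obstacle to be the trace reduction in the first step---correctly exploiting the $\E$-independence of $\R$ together with Lemma~\ref{lem:1} to collapse $\D_\tau\R$ to $\D$, and then reading off the coefficient $\text{deg}(\nu_i)$ from the Kronecker and block structure; once the objective is in separable convex form, the remaining optimization is routine.
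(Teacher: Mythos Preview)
Your proof is correct and follows essentially the same approach as the paper: reduce $\mathbf{tr}(\R^T\lap_{e,s}^\tau\R)$ to $\sum_{i,j}\text{deg}(\nu_i)\epsilon_{i,j}^{-1}$ via $\D_\tau\R=\D$, observe the resulting separability, solve each scalar subproblem by setting the derivative to zero, and project onto the box. The only cosmetic difference is that the paper differentiates in the variable $\epsilon_{i,j}^{-1}$ while you work directly in $\epsilon_{i,j}$; you also make the convexity and projection justification explicit, which the paper leaves implicit.
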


\begin{remark}
The assignment rule in Proposition~\ref{prop:2} is decentralized, as the optimal assignment value depends only on the (unweighted) degree of the $i$-th node and the parameters $h$ and $r$, which are locally known to the $i$-th node without global knowledge of the network topology.
\end{remark}

From this result we can see that for a class of regularization terms, the optimal time scale assignment is again driven by the degree distribution, which is in-line with the previous results. It is conceivable to consider using this result with online signal identification to locally adjust time scales in response to adversarial noise entering the system.


\section{Example: Flocking via Second-Order Consensus}
\label{sec:example}

Flocking is a behaviour exhibited by certain multi-agent systems that are coordinating their motion into a cohesive formation, for example birds or stampeding buffalo.
A consensus-type algorithm can be proposed that allows a system of $n$ agents to agree on their velocity vector while maintaining a separation from their neighbours~\cite{Chapman2015}.

\subsection{Matrix-Valued Double Integrator Consensus}

In the case of vector-valued states, we can write the dynamics as 
\begin{align}
 \begin{bmatrix}
    \dot{\mathbf{x}} \\ \E\ddot{\mathbf{x}}
  \end{bmatrix} = 
  \begin{bmatrix}
    \mathbf{0} & I \\
    - \lap_w & - \lap_w
  \end{bmatrix}
                 \begin{bmatrix}
                   \mathbf{x}\\\dot{\mathbf{x}}
                 \end{bmatrix}+
  \begin{bmatrix}
    \mathbf{0} & \mathbf{0}\\
    I & - \D_\graph
  \end{bmatrix}
        \begin{bmatrix}
          \omega \\ v
        \end{bmatrix}.
\end{align}
\begin{theorem}
  The double-integrator edge consensus model is given by 
  \begin{align}
  \begin{bmatrix}
    \dot{\mathbf{x}}_e\\ \ddot{\mathbf{x}}_e
  \end{bmatrix} &= 
  \begin{bmatrix}
    \mathbf{0} & 0 & ~~~~I & \\
      -\lap_e \R\W\R^T & \mathbf{0} & -\lap_e \R\W\R^T & \mathbf{0}\\
      \mathbf{0} & 0 & \mathbf{0} & 0
  \end{bmatrix}
\begin{bmatrix}
  \mathbf{x}_e \\ \dot{\mathbf{x}}_e
\end{bmatrix}\\
&+
  \begin{bmatrix}
    \mathbf{0} & \mathbf{0}\\
    \D_\tau^T\E^{-1} & -\lap_e \R\\
    \Xi^{-1} \mathbb{1}^T & \mathbf{0}
  \end{bmatrix}
        \begin{bmatrix}
          \omega \\ v
        \end{bmatrix},    
  \end{align}
and so the double-integrator consensus on the edge states of the chosen spanning tree $\tau$ is given by 
\begin{align}\label{eq:6}
  \begin{split}
  \begin{bmatrix}
    \dot{\mathbf{x}}_\tau \\ \ddot{\mathbf{x}}_\tau
  \end{bmatrix}&= 
                 \begin{bmatrix}
                   \mathbf{0} & I \\
                   -\lap_e \R\W\R^T & -\lap_e \R\W\R^T
                 \end{bmatrix}
                                   \begin{bmatrix}
                                     \mathbf{x}_\tau \\ \dot{\mathbf{x}}_\tau
                                   \end{bmatrix}\\
&+
  \begin{bmatrix}
    \mathbf{0} & \mathbf{0}\\
    \D_\tau^T & -\lap_e \R
  \end{bmatrix}
               \begin{bmatrix}
                 \omega \\ v
               \end{bmatrix}.
               \end{split}
\end{align}
\end{theorem}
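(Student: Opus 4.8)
The plan is to recognize that the given second-order system is a companion-form lift of the first-order scaled Laplacian dynamics, and therefore to block-diagonalize it by applying the \emph{same} similarity transform $S_v$ from Theorem~\ref{thm:similarity} to both the position and velocity copies of the state at once. First I would put the system into standard state-space form by premultiplying the lower block equation $\E\ddot{\mathbf{x}} = \cdots$ by $\E^{-1}$, so that the state matrix becomes $\begin{bmatrix}\mathbf{0} & I\\ -\lap_{w,s} & -\lap_{w,s}\end{bmatrix}$ with $\lap_{w,s} = \E^{-1}\D\W\D^T$, and the input matrix becomes $\begin{bmatrix}\mathbf{0} & \mathbf{0}\\ \E^{-1} & -\E^{-1}\D\end{bmatrix}$. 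The key structural observation is that the same scaled Laplacian $\lap_{w,s}$ occupies both the ``stiffness'' and the ``damping'' slots of the companion form.

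Next I would introduce the lifted change of variables $\begin{bmatrix}\mathbf{x}\\\dot{\mathbf{x}}\end{bmatrix} = \mathrm{Blkdiag}(S_v,S_v)\begin{bmatrix}\mathbf{x}_e\\\dot{\mathbf{x}}_e\end{bmatrix}$, which is consistent because $S_v$ is constant in time, so differentiating $\mathbf{x} = S_v\mathbf{x}_e$ indeed gives $\dot{\mathbf{x}} = S_v\dot{\mathbf{x}}_e$. Conjugating the state matrix by $\mathrm{Blkdiag}(S_v,S_v)$, the off-diagonal kinematic blocks are invariant since $S_v^{-1} I\, S_v = I$ and $S_v^{-1}\mathbf{0}\,S_v = \mathbf{0}$, while each occurrence of $\lap_{w,s}$ is carried to $S_v^{-1}\lap_{w,s}S_v = \begin{bmatrix}\lap_{e,s}\R\W\R^T & \mathbf{0}\\ \mathbf{0} & \mathbf{0}\end{bmatrix}$ by Theorem~\ref{thm:similarity}. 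Reordering the resulting blocks along the (position/velocity)$\times$(tree/consensus) partition then reproduces exactly the edge-consensus state matrix in the statement, in which the consensus-subspace velocity states inherit no Laplacian coupling.

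For the input matrix I would left-multiply $\begin{bmatrix}\mathbf{0} & \mathbf{0}\\ \E^{-1} & -\E^{-1}\D\end{bmatrix}$ by $\mathrm{Blkdiag}(S_v^{-1},S_v^{-1})$ using $S_v^{-1} = \begin{bmatrix}\D_\tau^T\\ \Xi^{-1}F\end{bmatrix}$. The identities $F\E^{-1} = \mathbb{1}^T$ and $\mathbb{1}^T\D = \mathbf{0}$ (columns of the incidence matrix sum to zero) collapse the process-noise column to $\begin{bmatrix}\D_\tau^T\E^{-1}\\ \Xi^{-1}\mathbb{1}^T\end{bmatrix}$, while $\D = \D_\tau\R$ and $\lap_{e,s} = \D_\tau^T\E^{-1}\D_\tau$ from Lemma~\ref{lem:1} collapse the measurement-noise column to $\begin{bmatrix}-\lap_{e,s}\R\\ \mathbf{0}\end{bmatrix}$, matching the input matrix of the edge model.

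Finally, since the consensus-space states $\mathbf{x}_{\mathbf{1}}$ enter neither the position nor the velocity dynamics of the tree states $\mathbf{x}_\tau$ (their Laplacian blocks vanish), the tree states form a closed subsystem; extracting the corresponding blocks of the state and input matrices yields~\eqref{eq:6}. The main obstacle is bookkeeping rather than conceptual: I must verify that conjugating the companion form acts block-entrywise on $\lap_{w,s}$ so that its stiffness and damping copies transform in lockstep, and I must track the block reordering carefully so that the decoupling of the consensus-subspace velocity states is manifest, thereby legitimizing the restriction to the spanning-tree subsystem.
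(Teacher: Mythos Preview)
Your proposal is correct and follows precisely the approach the paper takes: its proof is the single sentence ``Applying the coordinate transform $\mathbf{x}_e = S_v \mathbf{x}$ and following a similar calculation as in Theorem~\ref{thm:similarity} yields the result,'' and you have simply spelled out that calculation---lifting $S_v$ to $\mathrm{Blkdiag}(S_v,S_v)$, invoking Theorem~\ref{thm:similarity} on each $\lap_{w,s}$ block, and using the identities $F\E^{-1}=\mathbb{1}^T$, $\mathbb{1}^T\D=\mathbf{0}$, and $\D=\D_\tau\R$ for the input matrix. Your bookkeeping is sound, and the restriction to the tree subsystem is justified exactly as you describe.
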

\begin{proof}
Appling the coordinate transform $\mathbf{x}_e = S_v \mathbf{x}$ and following a similar calculation as in Theorem~\ref{thm:similarity} yields the result.
\end{proof}

We can also explicitly compute the form of the $\mathcal{H}_2$ norm for the matrix-weighted double-integrator consensus.
\begin{theorem}
  \label{thr:2}
  The controllability gramian for the time scaled double-integrator consensus is given by 
  \begin{align}
    \X^* &= \dfrac{1}{2} 
    \begin{bmatrix}
      \X_1& \mathbf{0}\\
      \mathbf{0}& \X_2
    \end{bmatrix}\\
   \X_1 &=\sigma_w^2 \left[\left(\R\W\R^T\right)^{-1}\lap_e^{-1}\left(\R\W\R^T\right)^{-1} \right] +\sigma^2_v\left(\R\W\R^T\right)^{-1}\\
    \X_2 &= \sigma_w^2\left(\R\W\R^T\right)^{-1} + \sigma_v^2 \lap_e.
  \end{align}
  Furthermore, the blocks $\X_1,\X_2$ of $\X^*$ correspond to the position and velocity states, respectively.
  Hence, one can consider the $\Htwo$ performance of the position and velocity states separately or aggregately by examining the $\Htwo$ norms 
  \begin{align}
    \Htwo^{(1)} = \frac{1}{2}\mathbf{tr}&\left(\R^T\X_1\R\right),~\Htwo^{(2)} = \frac{1}{2}\mathbf{tr}\left(\R^T\X_2\R\right)\\\Htwo^{(1)} &= \frac{1}{2}\mathbf{tr}\left(\R^T(\X_1+ \X_2)\R\right).
  \end{align}
\end{theorem}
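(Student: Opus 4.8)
The plan is to verify by direct substitution that the stated block-diagonal matrix solves the controllability Lyapunov equation for the model~\eqref{eq:6}, and then to read the three $\Htwo$ norms off its block structure. Writing~\eqref{eq:6} as $\dot{\mathbf{x}} = A\mathbf{x} + B\mathbf{n}$ with
\[
  A = \begin{bmatrix} \mathbf{0} & I \\ -M & -M \end{bmatrix}, \qquad M := \lap_e \R\W\R^T ,
\]
and choosing the noise covariances as in the single-integrator derivation of~\eqref{eq:lyapunov} (with $\Omega$ matched to the input matrix of~\eqref{eq:6} so that the process term reduces to $\sigma_w^2\lap_e$, and $\Gamma = \sigma_v\W^{1/2}$), the weighted input $B\,\mathrm{Blkdiag}(\Omega\Omega^T,\Gamma\Gamma^T)\,B^T$ is supported entirely on the velocity block and collapses to $\mathrm{Blkdiag}(\mathbf{0},G)$ with $G := \sigma_w^2\lap_e + \sigma_v^2\lap_e\R\W\R^T\lap_e$. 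Hence the controllability gramian $\X^\star$ solves $A\X^\star + \X^\star A^T + \mathrm{Blkdiag}(\mathbf{0},G) = 0$.

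First I would substitute the block-diagonal ansatz $\X^\star = \tfrac12\,\mathrm{Blkdiag}(\X_1,\X_2)$ and expand the equation blockwise, recording first that $M^T = \R\W\R^T\lap_e$ because both $\lap_e$ and $\R\W\R^T$ are symmetric. The $(1,1)$ block vanishes identically; the two off-diagonal blocks give the coupling conditions $\X_2 = M\X_1$ and $\X_2 = \X_1 M^T$; and the $(2,2)$ block gives the reduced Lyapunov equation $M\X_2 + \X_2 M^T = 2G$. Thus the theorem reduces to checking these three identities for the stated $\X_1,\X_2$.

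Second comes the verification, which becomes a short algebraic exercise after abbreviating $L := \lap_e$ and $P := \R\W\R^T$, so that $M = LP$ and $M^T = PL$. For the coupling conditions, inserting $\X_1 = \sigma_w^2 P^{-1}L^{-1}P^{-1} + \sigma_v^2 P^{-1}$ and using the telescoping cancellations $LPP^{-1} = L$ and $LL^{-1} = I$ gives $M\X_1 = LP\X_1 = \sigma_w^2 P^{-1} + \sigma_v^2 L = \X_2$, and the mirror-image computation confirms $\X_1 M^T = \X_1 PL = \X_2$. For the $(2,2)$ block, both $M\X_2 = LP(\sigma_w^2 P^{-1}+\sigma_v^2 L)$ and $\X_2 M^T = (\sigma_w^2 P^{-1}+\sigma_v^2 L)PL$ simplify to $\sigma_w^2 L + \sigma_v^2 LPL = G$, so their sum is $2G$ as required. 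The main obstacle is purely bookkeeping: $L$ and $P$ do not commute, and it is precisely the ``sandwiched'' form $P^{-1}L^{-1}P^{-1}$ of the leading term of $\X_1$ that renders the two coupling conditions and the $(2,2)$ equation simultaneously consistent, so the factor ordering must be tracked carefully throughout.

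Finally, to conclude that the verified matrix is \emph{the} controllability gramian rather than merely \emph{a} solution, I would note that $A$ is Hurwitz on the tree subspace: its characteristic polynomial factors as $\det(\lambda^2 I + \lambda M + M) = \prod_i(\lambda^2 + \mu_i\lambda + \mu_i)$ over the eigenvalues $\mu_i$ of $M$, which are real and positive because $M = LP$ is similar to the symmetric positive-definite matrix $L^{1/2}PL^{1/2}$, whence each quadratic factor has both roots in the open left half-plane. Uniqueness of the positive-definite Lyapunov solution then upgrades the verification into a proof. The $\Htwo$ decomposition is immediate from block-diagonality: with the position output $C_1 = [\,\R^T\ \ \mathbf{0}\,]$ and velocity output $C_2 = [\,\mathbf{0}\ \ \R^T\,]$ all cross terms vanish, giving $\mathbf{tr}(C_1\X^\star C_1^T) = \tfrac12\mathbf{tr}(\R^T\X_1\R)$ and $\mathbf{tr}(C_2\X^\star C_2^T) = \tfrac12\mathbf{tr}(\R^T\X_2\R)$, while stacking both outputs yields the aggregate $\tfrac12\mathbf{tr}(\R^T(\X_1+\X_2)\R)$.
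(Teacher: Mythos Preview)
Your proof is correct and follows essentially the same route as the paper: set up the block Lyapunov equation $A\X^\star+\X^\star A^T+BB^T=\mathbf{0}$ for the tree dynamics~\eqref{eq:6}, resolve it blockwise (the paper solves for the blocks directly and finds the off-diagonal block $\X_3$ vanishes, whereas you verify the block-diagonal ansatz and then invoke uniqueness), and read the $\Htwo$ norms off the block structure via the three output matrices. Your added Hurwitz argument for $A$ via the similarity $M=\lap_e\R\W\R^T\sim \lap_e^{1/2}\R\W\R^T\lap_e^{1/2}$ is a nice touch that makes the uniqueness step explicit, something the paper leaves implicit.
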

\begin{proof}
  From the dynamics~\eqref{eq:6}, the controllablity gramian is given by the positive semi-definite solution to the Lyapunov equation 
  \begin{align}
    A\X^* + \X^*A^T + BB^T = \mathbf{0},\label{eq:7}
  \end{align}
  where $\X^*,A,$ and $BB^T$ are given by,
  \begin{align}
    A &= 
        \begin{bmatrix}
          \mathbf{0} & I \\
          -\lap_e \R\W\R^T & -\lap_e \R\W\R^T
        \end{bmatrix},\\
    BB^T &= 
           \begin{bmatrix}
             \mathbf{0}&\mathbf{0}\\
             \mathbf{0}&\sigma^2_w \lap_e + \sigma^2_v\lap_e \R\R^T \lap_e
           \end{bmatrix},\\
    \X^* &= 
          \begin{bmatrix}
            \X_1 & \X_3 \\ \X_3& \X_2
          \end{bmatrix}.
  \end{align}
  Solving Equation~\eqref{eq:7} yields 
  \begin{align}
    \X_3 &= \mathbf{0},\ \X_2 = \sigma_w^2\left(\R\W\R^T\right)^{-1} + \sigma_v^2 \lap_e\\
    \X_2 &= \X_1\R\W\R^T\lap_e,
  \end{align}
  and solving for $\X_1$ in the last display yields 
  \begin{align}
    \resizebox{\columnwidth}{!}
    {
    $\X_1 = \sigma_w^2 \left[\left(\R\W\R^T\right)^{-1}\lap_e^{-1}\left(\R\W\R^T\right)^{-1} \right] +\sigma^2_v\left(\R\W\R^T\right)^{-1}.$
    }
  \end{align}%
To measure the position, velocity, or both states for consideration in the $\Htwo$ norm, one can choose the observation matrices 
  \begin{align}
    \begin{bmatrix}
      \R^T & \mathbf{0}
    \end{bmatrix},~
    \begin{bmatrix}
       \mathbf{0} & \R^T
    \end{bmatrix},~
    \begin{bmatrix}
      \R^T & \mathbf{0}\\
      \mathbf{0} & \R^T
    \end{bmatrix},
  \end{align}
  respectively.
\end{proof}

\subsection{Numerical Example}

The weight update scheme applied to the second-order consensus problem was implemented numerically.
The task assigned to the agents was to use the second-order consensus protocol to achieve the formation shown in Figure~\ref{fig:formation} -- a formation assigned by sampling discrete points on a 2D spiral.

\begin{figure}
  \centering
  \includegraphics[width=0.6\columnwidth]{./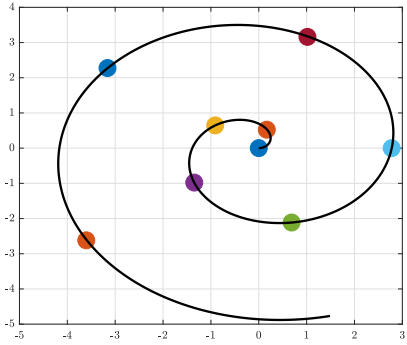}
  \caption{Relative formation of the agents, defined by discrete points on a spiral.}
  \label{fig:formation}
\end{figure}

Consensus on the formation is achieved by the second-order protocol with a constant signal $\mathbf{d}_i$ specifying the position in the formation:
\begin{align}
\resizebox{\columnwidth}{!}
{
  $\ddot{\mathbf{x}}_i = -\sum_{j\in \mathcal{N}(i)} \W_{ij}(\mathbf{x}_i- \mathbf{x}_j - \mathbf{d}_i) - \sum_{j \in \mathcal{N}(i)} \W_{ij}(\dot{\mathbf{x}}_i - \dot{\mathbf{x}_j}).$ 
}\label{eq:8}
\end{align}
\noindent Since this is a constant signal, the $\mathcal{H}_2$ performance of the edge states remains the same as in the previous section.

The initial selection of weights was chosen at random using the generator
\begin{align}
W_{\{i,j\}} = \alpha\left[\dfrac{G_{\{i,j\}} + G_{\{i,j\}}^T}{2} + 2 I_2\right],\label{eq:5}
\end{align}
where $G_{\{i,j\}}$ is a $2\times 2$ matrix with entries distributed according to a zero-mean standard Gaussian, and $\alpha=0.3$ was chosen arbitrarily to yield a suboptimal initial selection of weights.
The upper and lower bounds on the weights were chosen with the same generator in Equation~\eqref{eq:5}, but with $\alpha_l = 0.05$ and $\alpha_u = 10$. The initial time scale parameters were taken to be identically unity. The penalty parameter was chosen as $h=0.01$.

The gradient descent algorithm from Proposition~\ref{prop:1} converges quickly, and intermediate graph weights are visualized in Figure~\ref{fig:weights}.
None of the optimal edge weights saturated the upper and lower bounds in this setup. 
The minimizing time scale assignment was calculated using Proposition~\ref{prop:2}.


\begin{figure}
  \centering
  \includegraphics[width=0.9\columnwidth]{./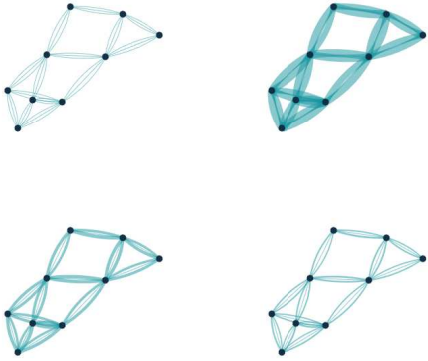}
  \vspace{-7mm}
  \caption{Visualization of edge weights over iterations $k=1$ (top left), 2 (top right), 3 (bottom left) \& 4 (bottom right). Each of the 3 independent parameters of the $2\times 2$ matrix-valued weight is visualized in a multigraph.}
  \label{fig:weights}
\end{figure}

These weights and scaling paramters were then used in a simulation of the dynamics in~\eqref{eq:8} over a time span of 30 seconds.
At $10\leq t\leq 20$, the formation is subject to a `gust' of noise on the nodes and edges with covariance $\sigma_w^2 I$ and $\sigma_v^2 \W$, with $\sigma_v=\sigma_w = 5$.
Simulations were performed with no updates (NUD), updates to edge weights (WUD) or time scales (TUD) during the wind gusts, and with both updates (BUD) during gusts.
The edge states for the $x$ and $y$ directions for these four cases are shown in Figure~\ref{fig:edgeStates}, and the variance away from the consensus value $x_e(t_f)$, $\mathrm{Var}[x_e(t)]:= \left[x_e(t) - x_e(t_f)\right]^2$, is shown in Figure~\ref{fig:edgeVar}.
The updated weights and scales outperform their initial, suboptimal values.

\begin{figure}
  \centering
  \includegraphics[width=0.9\columnwidth]{./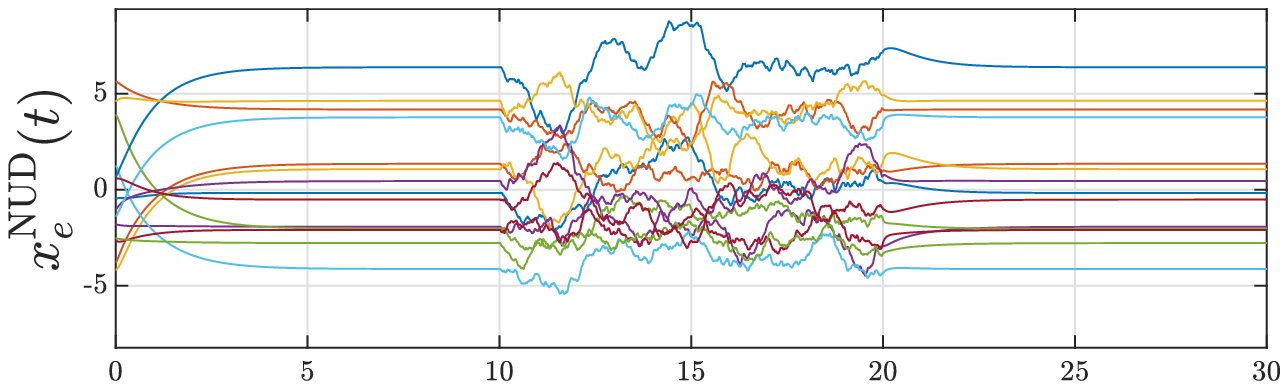}\\
  \includegraphics[width=0.9\columnwidth]{./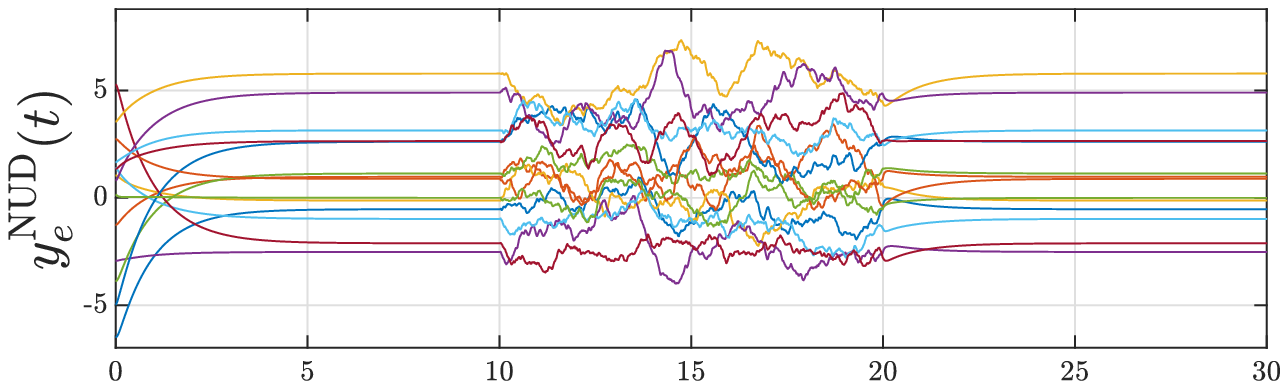}\\
  \includegraphics[width=0.9\columnwidth]{./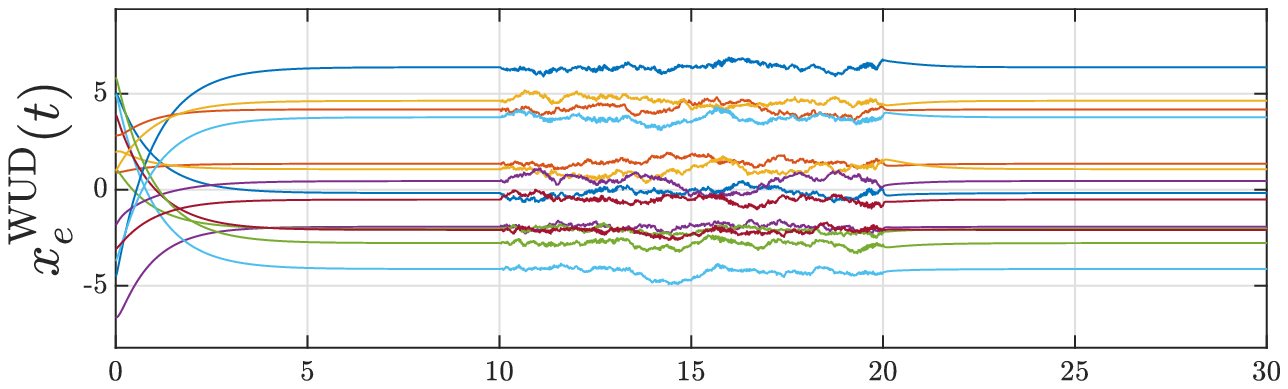}\\
  \includegraphics[width=0.9\columnwidth]{./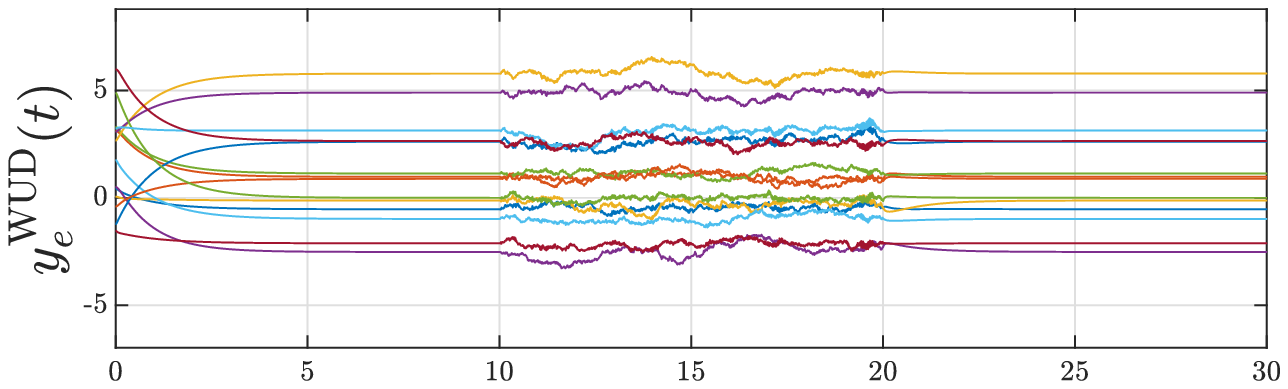}\\
  \includegraphics[width=0.9\columnwidth]{./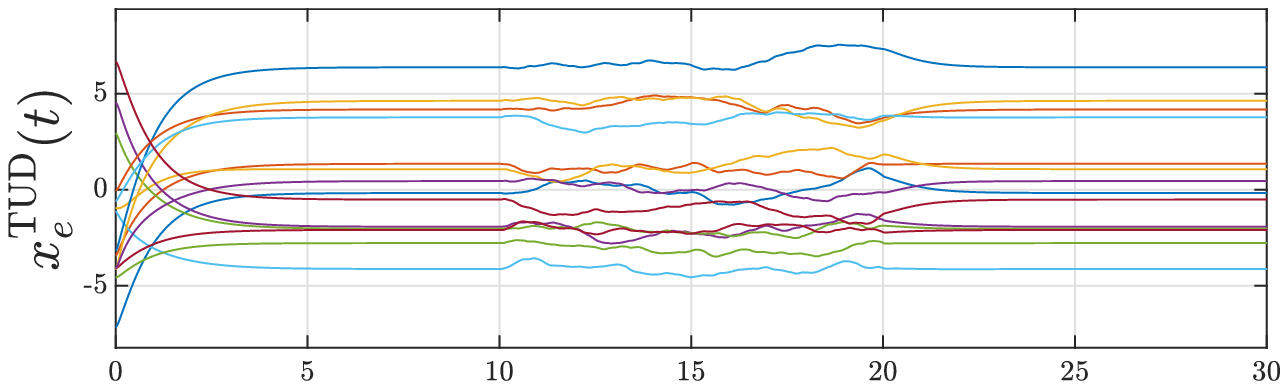}\\
  \includegraphics[width=0.9\columnwidth]{./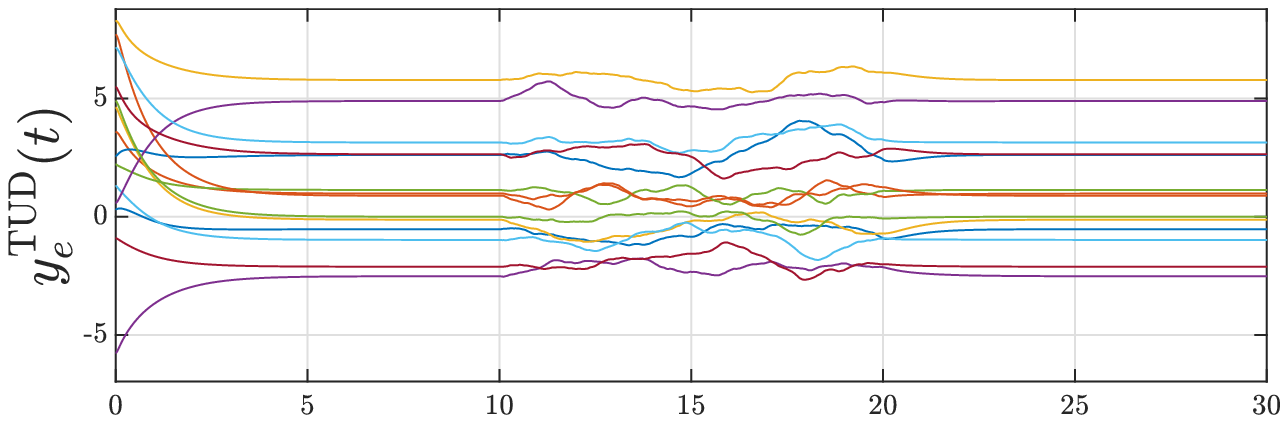}\\
  \includegraphics[width=0.9\columnwidth]{./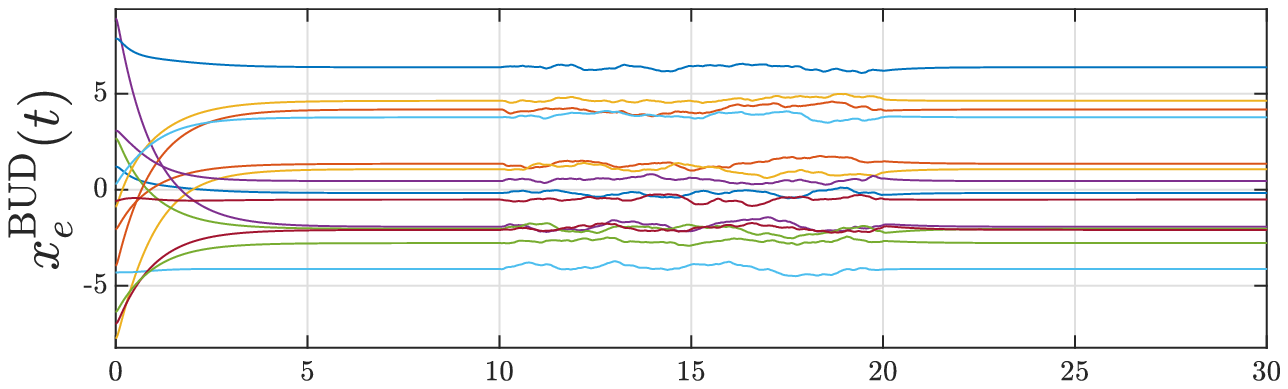}\\
  \includegraphics[width=0.9\columnwidth]{./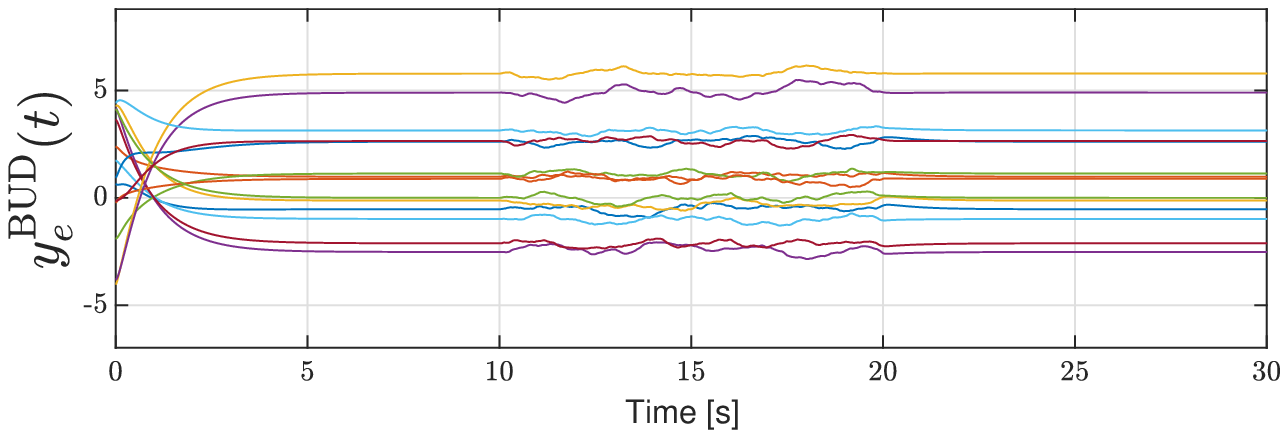}
  \caption{Edge states in $x,y$ directions over time subjected to gust at $10\leq t\leq 20$; cases from top to bottom are with no updates (NUD), weight updates (WUD), time scale update (TUD), and both updates (BUD). }
  \label{fig:edgeStates}
\end{figure}

\begin{figure}
  \centering
  \includegraphics[width=\columnwidth]{./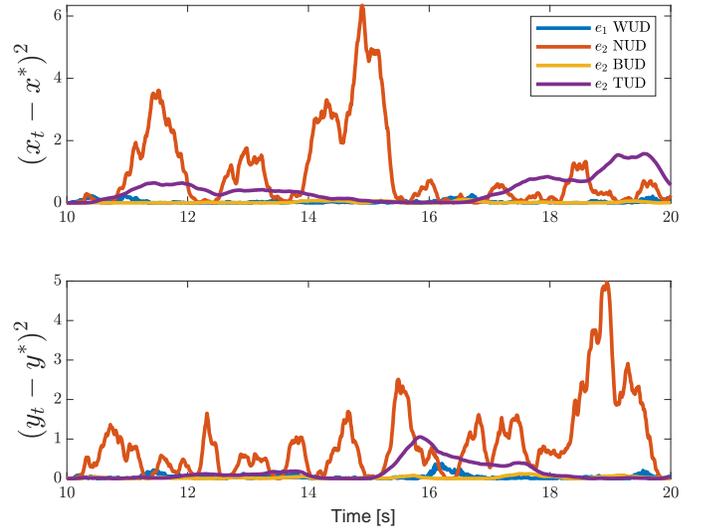}
  \caption{Variance from consensus for a representative edge $[x_e(t) - x_e(t_f)]^2$ for the case with the weight update (WUD), time scale update (TUD), both updates (BUD), and with no updates (NUD).}
  \label{fig:edgeVar}
\end{figure}


\section{Conclusion}
\label{sec:conclusion}

In this paper we have developed a framework for investigating noise-driven consensus on matrix-weighted and time scaled graphs, with the aim of minimizing the $\Htwo$ performance of such systems.
We identified a convenient choice of noise covariances that allows for the separation of the performance contributions from the edge weighting and time scales.
This allowed us to investigate the optimal assignment of time scale parameters and edge weights to promote network resilience.
Finally, we applied these results to a flocking example, where we observed that applying a time scale assignment and edge weight update in response to an injection of noise results in less perturbation of the agent states compared to the non-updated case, or optimizing over only the edges or only the time scales.

In real-world networked dynamical problems adversarial noise will likely not be applied to all nodes within the network as was taken to be the case in this work.
Thus, a useful extension of this work is to investigate what results can be found when the noise injection is limited to a subset of nodes, or in the matrix weighted case, a subset of substates.
Furthermore, interesting problems such as leader selection in noise driven consensus networks have previously been investigated, so these areas are also potential directions for future work by extending those related topics to the time scaled and matrix weighted case.


\section*{Acknowledgments}
This material is based upon work supported by the National Science Foundation Graduate Research Fellowship Program. Any opinions, findings, and conclusions or recommendations expressed in this material are those of the author(s) and do not necessarily reflect the views of the National Science Foundation.

\bibliographystyle{IEEEtran}
\bibliography{references}

\section*{Appendix} \label{sec:appendix}
Here, we prove Lemmas~\ref{lem:1}-\ref{lem:ordering}, and Propositions~\ref{prop:1} \& \ref{prop:2}.

\textbf{Lemma~\ref{lem:1}.}
\begin{proof}
  We can compute: 
  \begin{align}
    \T_\tau^c &= \left(\D_\tau ^T \D_\tau\right)^{-1} \D_\tau^T\D_c\\
              &=\left( \left( D_\tau^TD_\tau \right)^{-1} D_\tau D_c\right) \otimes I_k
              = T_\tau^C\otimes I_k,
  \end{align}
  and similarly,
  $\D = D\otimes I_k = \left(D_\tau \otimes I_k \right)\left( R\otimes I_k \right) = \D_\tau \mathbf{R}$.
\end{proof}

\textbf{Lemma~\ref{lem:2}.}
\begin{proof}
      Denote the product $S_v^{-1}S_v$ in block form: 
      \begin{align}
        S_v^{-1}S_v = 
        \begin{bmatrix}
          \mathcal{A}&\mathcal{B}\\\mathcal{C}&\mathcal{D}
        \end{bmatrix}.
      \end{align}
      First, note that 
      \begin{align}
        F\E^{-1} = 
        \begin{bmatrix}
          E_1&\cdots& E_n
        \end{bmatrix} \E^{-1} = 
                      \begin{bmatrix}
                        I&\cdots & I
                      \end{bmatrix}= \mathbf{1}^T_n \otimes I_k.
      \end{align}
      Then, we can compute each term: 
      \begin{align}
        \mathcal{A}&= \D_\tau ^T \E^{-1} \D_\tau \left(D_\tau^T \E^{-1} \D_\tau\right)^{-1} = I\\
        \mathcal{B}&= \D_\tau^T \mathbb{1} 
         = \left(D_\tau^T\mathbf{1}_n \right) \otimes I_k = 0 \otimes I_k = \mathbf{0}.\\
        \mathcal{C}&= \Xi^{-1} F \E^{-1} \D_\tau \left(\D_\tau^T \E^{-1} \D_\tau\right)^{-1}\\
         &= \Xi^{-1} \left(\mathbf{1}^T_n \D_\tau \otimes I_k\right) \left(\D_\tau^T \E^{-1} \D_\tau\right)^{-1}= \mathbf{0}.
      \end{align}
      Lastly, we have 
      \begin{align}
        \mathcal{D} &= \Xi^{-1} F \mathbb{1}\\
          &=
            \begin{bmatrix}
              \epsilon_{s,1}^{-1} & & \\
              & \ddots & \\
              & & \epsilon_{s,k}^{-1}
            \end{bmatrix}
                  \begin{bmatrix}
                    E_1 & \cdots & E_n
                  \end{bmatrix} \left( \mathbf{1}_n \otimes I_k\right)\\
          &=
            \begin{bmatrix}
              \epsilon_{s,1}^{-1} & & \\
              & \ddots & \\
              & & \epsilon_{s,k}^{-1}
            \end{bmatrix}
                  \begin{bmatrix}
                    \sum_{j=1}^n\epsilon_{j,1} & & \\
              & \ddots & \\
              & & \sum_{j=1}^n\epsilon_{j,k}
            \end{bmatrix}
= I_k.
      \end{align}
    \end{proof}

\textbf{Lemma~\ref{lem:ordering}.}
    \begin{proof}
For simplicity, we adopt the following notation the state and input matrix of~\eqref{eq:treemodel}, $A := -\lap_{e,s} \R \W \R^T$, and $B := B_{\tau} \Omega \Omega^T B_{\tau}^T + B_c \Gamma \Gamma^T B_c^T$. Due to the stability of $\lap(\graph)$, we know the solution to~\eqref{eq:lyapunov} is positive definite, and can be written as in integral form~\cite{skogestad2001},
\[
X = \int_0^\infty e^{A t} B_{\tau} \Omega \Omega^T B_{\tau}^T e^{A^T t} \diff  + \int_0^\infty e^{A t} B_c \Gamma \Gamma^T B_c^T e^{A^T t} \diff
\]
Consider either term in the above solution, and generalize as,
\[
X_p = \int_0^\infty e^{A t} B_p Z Z^T B_p^T e^{A^T t} \diff 
\]
\noindent where $Z$ and $B_p$ are placeholders for a covariance matrix and input matrix, respectively. From this, consider the quadratic form with any $u$,
\[
u^T X_p u = \int_0^\infty \| Z^T B_p^T e^{A^T t} u \|_2^2 \diff 
\]
\noindent This form is usually employed to show the positive definiteness of $X_p$ based on the stability of $A$, but here it can be used to order $X_p$'s based on the ordering of $Z$:
\begin{align*}
u^T(X_p - \bar{X}_p)u & = \int_0^\infty \| Z^T B_p^T e^{A^T t} u \|_2^2 - \| \bar{Z}^T B_p^T e^{A^T t} u \|_2^2 \diff  \\
& \leq \int_0^\infty  \left(\|Z^T\|_2^2 - \|\bar{Z}^T\|_2^2\right) \| B_p^T e^{A^T t} u \|_2^2 \diff 
\end{align*}
\noindent From $Z \preceq \bar{Z}$, it follows that $\|Z\|_2^2 \leq\|\bar{Z}\|_2^2$. Thus, this integral is always negative, which then implies $X_p - \bar{X}_p \preceq 0 \Rightarrow X_P \succeq \bar{X}_p$. By identical argument, 
\[
u^T(\underline{X}_p - X_p)u \leq  \int_0^\infty  \left(\|\underline{Z}^T\|_2^2 - \|Z^T\|_2^2\right) \| B_p^T e^{A^T t} u \|_2^2 \diff 
\]
\noindent implies that $\underline{X}_p \preceq X$. This shows the ordering for the solutions of~\eqref{eq:lyapunov} for ordered covariances. 

Now, consider the ordering of the performance, which is given by $\mathbf{tr}(R X R^T)$. From the ordering and positive definiteness of $\underline{X},X,\bar{X}$, we have, $u^T \underline{X} u \leq u^T X u \leq u^T \bar{X} u$, for all $u$. Letting $u = R x$ gives,
\[
x^T R^T \underline{X} R x \leq x^T R^T X R x \leq x^T R^T \bar{X} R x.
\]
Now, note that the trace of a matrix $M$ can be written as $\sum_i e_i^T M e_i$. Taking $M = R^T X R$ and $x = e_i$ in the above equation gives the desired ordering on the performances.
\end{proof}

\textbf{Proposition~\ref{prop:2}.}
\begin{proof}

We first identify the gradient of the cost function with respect to $W_c$, the weight on the $c$th edge in $\Edges$.
To this end, consider the functions 
\begin{align}
  f(X) &= \R \left[ \mathbf{e}_c X \mathbf{e}_c^T + \sum_{l \in \Edges\setminus c} \mathbf{e}_l  W_l \mathbf{e}_l^T \right] \R^T\\
       &= \R \left[ \mathrm{blk}_c^k (X) + \sum_{l\in\Edges\setminus c} \mathrm{blk}_c^k(W_l) \right] \R^T\\
  g(X) &= X^{-1},~ h(X) = \mathbf{tr}\left[ \R^T X \R\right],
\end{align}
where 
\begin{align}
  \mathrm{blk}_l^k(W_l) =  \mathbf{e}_l  W_l \mathbf{e}_l^T = \left( e_l\otimes I_k\right) W_l \left( e_l^T \otimes I_k\right)
\end{align}
denotes the $kn\times kn$ matrix with $W_l$ on the $l$th $k\times k$ diagonal block, with zeros otherwise.
Then, the cost function with $W_c$ as the argument, is given by 
\begin{align}
  \mathrm{tr}\left[ \R^T \left(\R\W\R^T\right)^{-1} \R\right] = \left(h\circ g \circ f\right)(W_c).\label{eq:1}
\end{align}
These functions have differentials 
\begin{align}
  df_X[H] &= \R \left[ \mathrm{blk}_c^k(H) \right] \R^T,~  dg_X[H] = - X^{-1} H X^{-1}\\
  dh_X[H] &= \mathrm{tr}\left[ \R^TH\R \right].
\end{align}
By the chain rule, we have that 
\begin{align}
  d(g\circ f)_X[H] &= - Y^{-1} \R \mathrm{blk}_c^k (H) \R^T Y^{-1}\\
  Y \triangleq f[H] &= \R \left[ \mathrm{blk}_c^k (H) + \sum_{l\in\Edges\setminus c} \mathrm{blk}_l^k(W_l) \right] \R^T\\
                   &\triangleq \R \W_H^c \R^T,
\end{align}
and so 
\begin{align}
      \resizebox{\columnwidth}{!}
    {$
  d(g\circ f)_X[H] = - \left(\R \W_H^c \R^T\right)^{-1} \R \mathrm{blk}_c^k (H) \R^T \left(\R \W_H^c \R^T\right)^{-1}
    $}\\\vspace{-0.5cm}
        \resizebox{\columnwidth}{!}
    {$
  d(h\circ g\circ f)_X[H]= -\mathrm{tr} \left[Q \mathrm{blk}_k^c(H) Q^T \right],~
  Q^T \triangleq \R^T\left( \R \W_H^c \R^T \right)^{-1} \R.
      $}
\end{align}
Hence, we can write 
\begin{align}
   &d(h\circ g\circ f)_X[H]= -\mathrm{tr} \left[Q \mathrm{blk}_k^c(H) Q^T \right]\\
                          &= -\mathrm{tr} \left[Q \mathbf{e}_c  H \mathbf{e}_c^T Q^T \right]
                          = \left \langle -\mathbf{e}_c^T Q^TQ \mathbf{e}_c, H \right \rangle,
\end{align}
and so the gradient of \eqref{eq:1} with respect to the $c$th weight $W_c$ is identified as the $c$th $k\times k$ diagonal block of $-Q^TQ$.

\end{proof}

\textbf{Proposition~\ref{prop:2}.}
\begin{proof}
Consider the cost function (denoted by $f(\epsilon_{i,j}^{-1})$) without the box constraint, and note that the $\Htwo$ portion can be rewritten as a double sum of the same form as the regularization term,
\[
\mathbf{tr}\left(\R^T \lap_{e,s}^\tau  \R\right) = \mathbf{tr}\left(\E^{-1} \lap \right) = \sum_{i=1}^n \sum_{j=1}^k \epsilon_{i,j}^{-1} \text{deg}(\nu_i),
\]
%
%
resulting in,
\[
f(\epsilon_{i,j}^{-1}) = \frac{1}{2} \sum_{i=1}^n \sum_{j=1}^k \epsilon_{i,j}^{-1} \text{deg}(\nu_i) + \frac{h}{2}\sum_{i=1}^n \sum_{j=1}^k (\epsilon_{i,j}^{-1})^{-r}.
\]
Minimizing this cost alone can be achieved by setting its gradient equal to zero,
\begin{align*}
\frac{\partial f}{\partial \epsilon_{i,j}^{-1}} & = \frac{\text{deg}(\nu_i)}{2} - \frac{h r}{2} (\epsilon_{i,j}^{-1})^{-(r+1)}=0,
\end{align*}
implying that,
$\epsilon_i^\ast  = ({\text{deg}(\nu_i)}/{h r})^{{1}/{r+1}}.$
Projecting this result onto the constraint set gives the result.
\end{proof}


\begin{IEEEbiography}
[{\includegraphics[width=1in,height=1.25in,clip]{./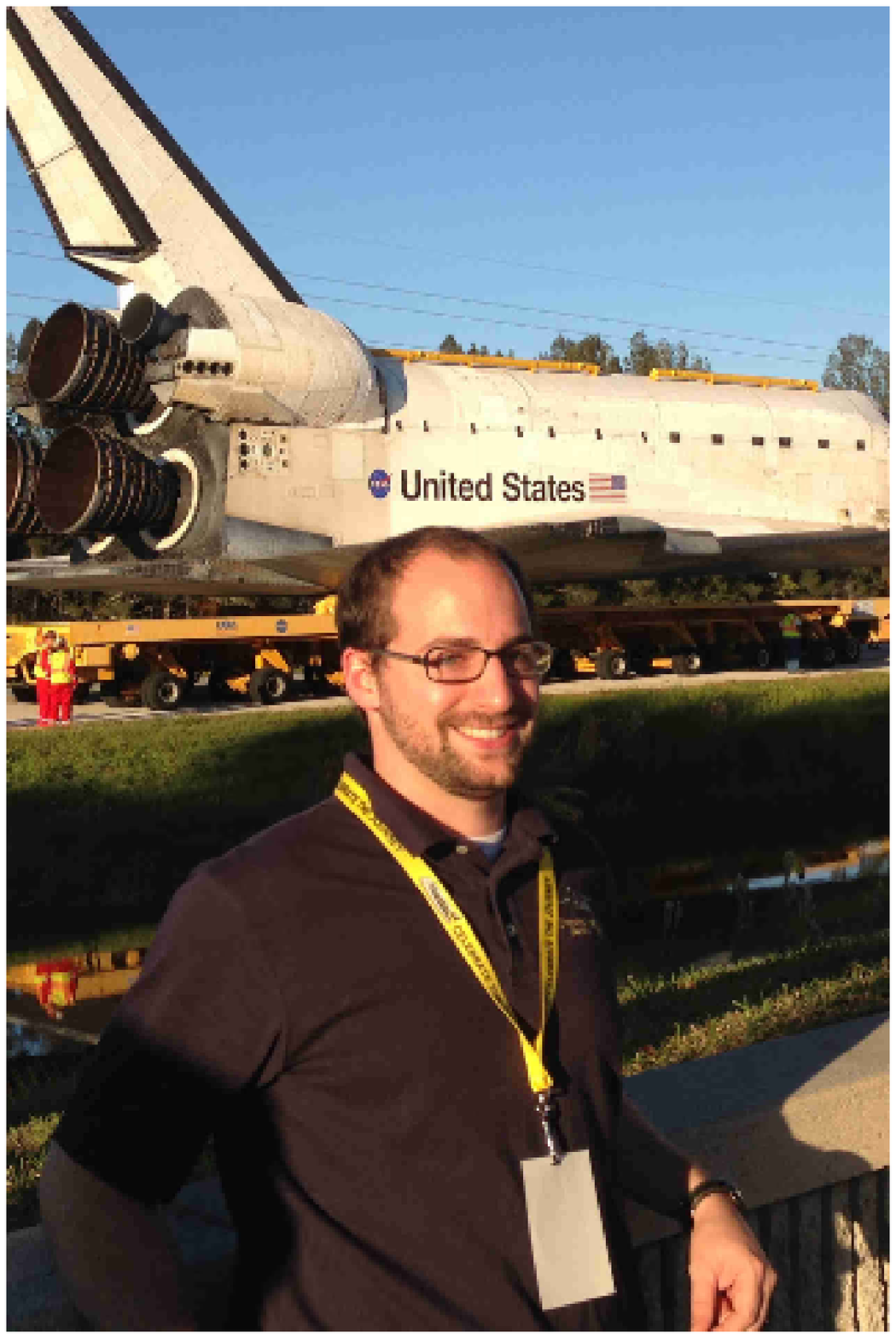}}]
{Dillon Foight} received a B.S. degree in space physics (astrophysics) from Embry-Riddle Aeronautical University, Prescott, AZ, USA in 2009, and worked as a science mission planner for the Chandra Space Telescope until 2015. He is currently a graduate student in the William E. Boeing Department of Aeronautics \& Astronautics at the University of Washington, Seattle, WA, USA. As a member of the Robotics, Aerospace, and Information Networks (RAIN) Lab, he focuses on analysis, control, and influence of networks featuring multi-time scale behavior. He is the recipient of the National Science Foundation Graduate Research Fellowship (2017).
\end{IEEEbiography}
\vskip 0pt plus -1fil
\begin{IEEEbiography}[{\includegraphics[width=1in,height=1.25in,clip]{./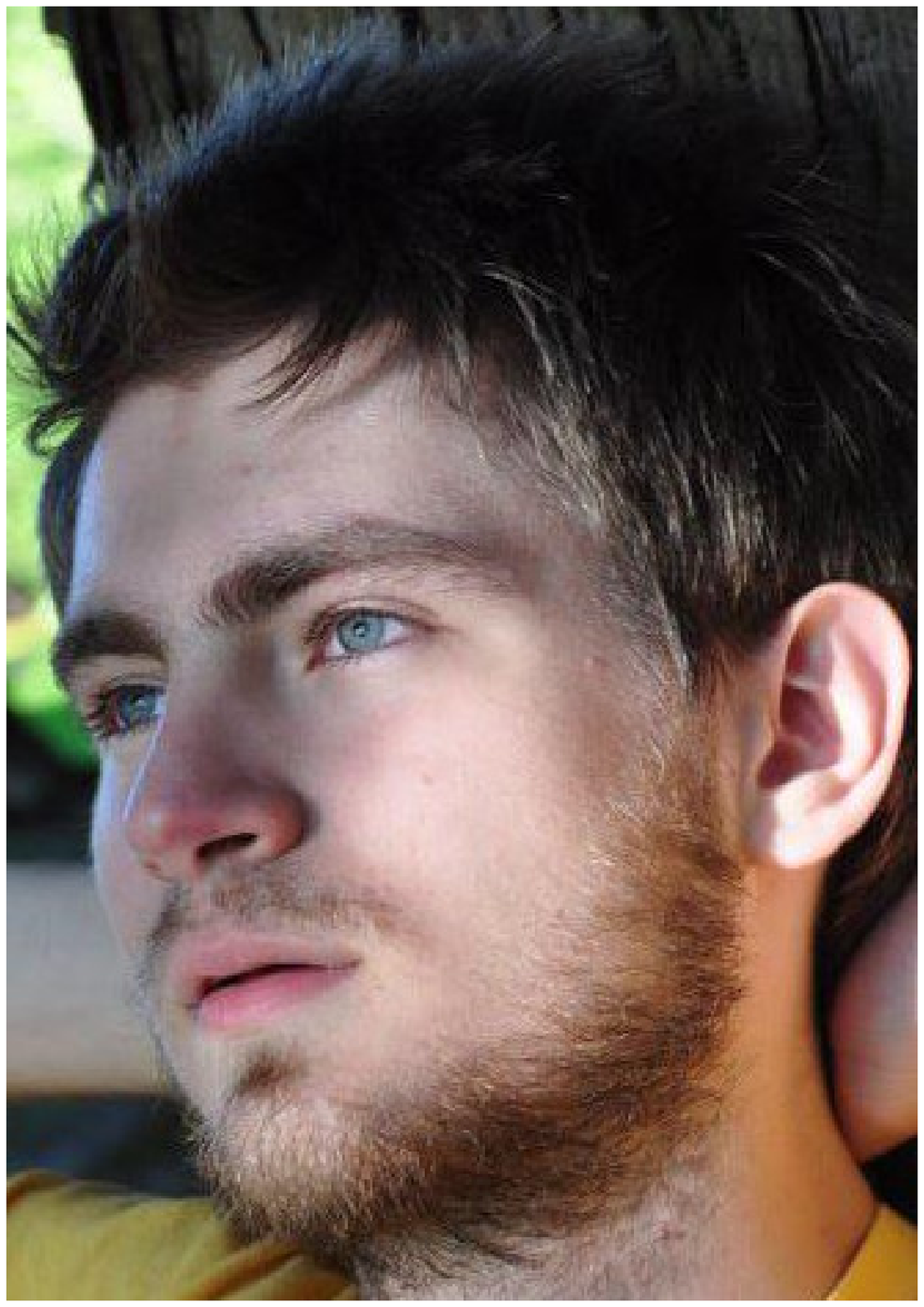}}]
{Mathias Hudoba de Badyn}
is a postdoctoral scholar in the Automatic Control Laboratory at the Swiss Federal Institute of Technology in Z\"{u}rich.
He received his Ph.D.~degree in the William E. Boeing Department of Aeronautics and Astronautics, and an M.Sc.~degree in the Department of Mathematics in 2019 at the University of Washington.
In 2014, he graduated from the University of British Columbia with a BSc in Combined Honours in Physics and Mathematics.
He held an NSERC PGS-D (CGS-D offered) from 2017-2019, and a University of Washington College of Engineering Dean's Fellowship from 2014-2015.
His research interests include the analysis and control of networked dynamical systems, with applications to autonomous air and space multi-vehicle systems.
\end{IEEEbiography}
\vskip 0pt plus -1fil
\begin{IEEEbiography}
[{\includegraphics[width=1in,height=1.25in,clip]{./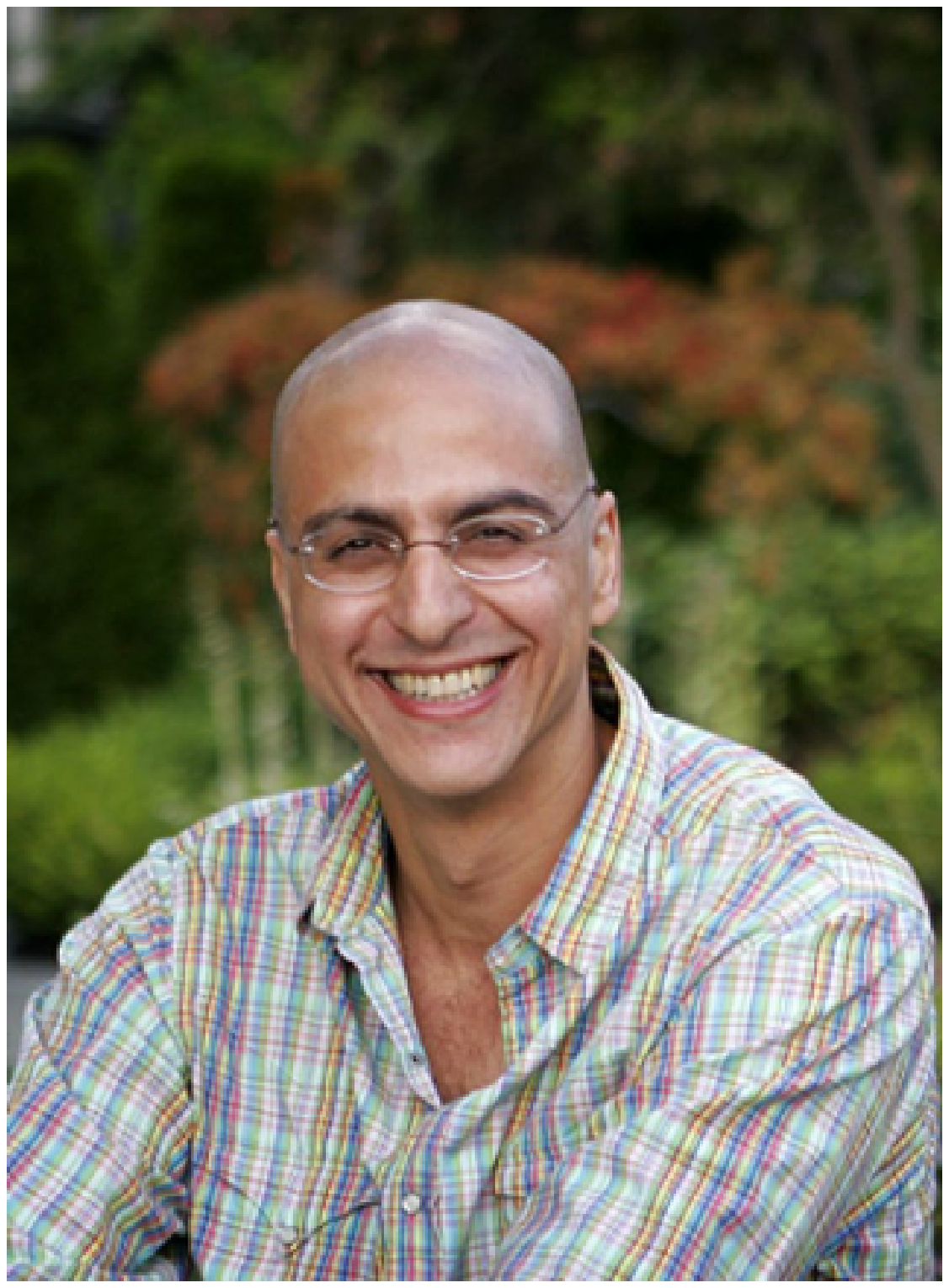}}]
{Mehran Mesbahi}
is a Professor of Aeronautics \& Astronautics and an Adjunct Professor of Mathematics and Electrical Engineering at the University of Washington. 
He received his Ph.D. from USC in 1996. 
He was a member of the Guidance, Navigation, and Analysis group at Jet Propulsion Laboratory from 1996-2000 and an Assistant Professor of Aerospace Engineering and Mechanics at the University of Minnesota from 2000-2002. 
He is currently the Director of the Robotics, Aerospace, and Information Networks (RAIN) Laboratory and the Executive Director of the Joint Center for Aerospace Technology Innovation.
His research interests are distributed and networked systems, autonomous aerospace and robotic systems, and the intersection between data science, networks, autonomy, and control theory, with multi-disciplinary applications.
\end{IEEEbiography}

\end{document}